\documentclass{article}

\usepackage{amsmath, amsfonts,amssymb}
\usepackage[dvips]{graphicx}
\usepackage{theorem, cases}
\usepackage{mathtools}

\newenvironment{proof}[1][Proof]{\textbf{#1.} }{\ \rule{0.5em}{0.5em}}

\newtheorem{thm}{Theorem}[section]

\newtheorem{cor}[thm]{Corollary}
\newtheorem{lem}[thm]{Lemma}
\newtheorem{prop}[thm]{Proposition}

\newtheorem{rmk}{Remark}[section]

{\theorembodyfont{\rmfamily}}
\newcommand{\fp}[2]{\frac{\partial #1}{\partial #2}}
\newcommand{\dint}{\displaystyle\int}

\newcommand{\be}{\begin{equation}}
\newcommand{\ee}{\end{equation}}

\newcommand{\bq}{\begin{eqnarray}}
\newcommand{\eq}{\end{eqnarray}}
\newcommand{\ex}{\mathbb{E}}

\newcommand{\bs}{\bigskip}

\newcommand{\ind}{1\hspace{-2.1mm}{1}} 
\newcommand{\nn}{\nonumber}

\newcommand{\ul}{\underline}
\newcommand{\ol}{\overline}

\newcommand{\pr}{\mathbb{P}}

\newcommand{\ep}{\mathbf{e}_q}
\newcommand{\epp}{\mathbf{e}_p}
\newcommand{\diff}{\textup{d}}

\begin{document}


\title{\Large \textbf{Occupation times, drawdowns, and drawups for one-dimensional regular diffusions}}

\author{
 Hongzhong Zhang\thanks{Dept. Statistics, Columbia University, New York, NY 10027 ({\tt hzhang@stat.columbia.edu})}
}
\maketitle
\begin{abstract}
The drawdown process of an one-dimensional regular diffusion process $X$  is given by $X$ reflected at its running maximum. The drawup process is given by $X$ reflected at its running minimum. We calculate the probability that a drawdown proceeds a drawup in an exponential time-horizon.  We then study the law of the occupation times of the drawdown process and the drawup process. These results are applied to address problems in risk analysis and for option pricing of the drawdown process. Finally, we present examples of  Brownian motion with drift and three-dimensional Bessel processes, where we prove an identity in law.
\end{abstract}

\emph{Key words}: 
Drawdowns; Drawups; Diffusion Process; Occupation Time\\
\indent\emph{MSC}(2010): 60J60; 60G17


\section{Introduction}
In a filtered probability space $(\Omega, \mathbb{F}, \mathcal{F}, \pr)$ with filtration $\mathbb{F}=\{\mathcal{F}_t\}$, we consider a one-dimensional regular time-homogenous diffusion $X$ on $I:=(l,\infty)$ with natural (or entrance) boundaries. Its evolution is governed by the stochastic differential equation
\begin{eqnarray}\label{eq:diffusion}
 dX_t & = & \mu(X_t)\,\diff t+\sigma(X_t)\diff B_t,\,\,\,X_0=x \in I,
\end{eqnarray}
with an infinitesimal generator  
$$\mathcal{L}_X=\frac{1}{2}\sigma^2(x)\frac{\partial^2}{\partial x^2}+\mu(x)\fp{}{x}.$$ 
 Here $B$ is a standard $\mathbb{F}$-Brownian motion, and $(\mu(\cdot),\sigma(\cdot))$ is a pair of real-valued continuous functions.  We introduce the running maximum and minimum processes of $X$  by
 \[\ol{X}_t:=\sup_{0\le s\le t}X_s\,\text{ and }\,\ul{X}_t:=\inf_{0\le s\le t}X_s,\,\,\,t\ge0.\]
The drawdown process of $X$, and its dual, the drawup process, are then defined as  $Y:=\ol{X}-X$ and $\hat{Y}:=X-\ul{X}$, respectively.  The first passage times of $Y$ and $\hat{Y}$ above a positive threshold are  respectively called the drawdown  and drawup times.  The occupation time of a stochastic process is the amount of time the stochastic process stays within a certain range. 

Laplace transforms of stopping times and occupation times for diffusion processes are well studied due to analytical tractability and their applications in risk theory, mathematical finance, and engineering.  Some classical results for general diffusion processes using the Feynman-Kac representation and excursion theory can be found in \cite{PitmanYor1,PitmanYor2}.  Recent advances on this topic include jumps in the underlying model \cite{CaiChenWan,occupationLevy,LandPenaZhou},
or incorporate memory into the model \cite{FordePogudinZhang}.  Among all the stopping times studied, one that is especially interesting is the drawdown time, which finds applications in financial risk management \cite{CarrZhanHaji, GrosZhou, MagdAtiy, PospVece, Vece07}, the theory of optimal stopping \cite{Meil,Peskir, SheppShiryaev} and the problem of the quickest change-point detection \cite{HadjZhanPoor,PoorHadj,ShiryaevCUSUM,corr,HadjSIAMCO}.
 The Laplace transform of the drawdown time for general diffusion processes was first derived  in \cite{Lehoczky77}.
Recently, \cite{salmval} derived the probability that the minimum of the drawdown  and drawup times proceeds an  exponential random variable  for a standard
Brownian motion. In \cite{ZhanHadj12}, the authors derived the joint Laplace transform of the drawdown time and the so-called speed of market crash for a general diffusion process, using progressive enlargement of filtrations. Laplace transforms involving drawdowns for general spectrally negative L\'evy processes are studied, using excursion theory in \cite{MijatovicPistorius}, which generalizes a result for diffusion-type processes in \cite{HadjVece, PospVeceHadj, Zhang2010,ZhanHadj}.

In this paper, we obtain a class of results regarding the sequential order of the drawdown and  drawup times in a finite time horizon, and laws of occupation times  of the drawdown process $Y$ and the drawup process $\hat{Y}$,
for a general one-dimensional regular diffusion $X$. 
In particular, we   derive the probability that the drawdown time precedes the drawup time in an exponential time horizon. We then compute the Laplace transforms of the occupation times of $X$ below $y$ and below the starting point, until the first exit time and the drawdown time, respectively. Using these results, we proceed to study the law of occupation time of the drawdown process $Y$ above $y$, and of the drawup process $\hat{Y}$ below $y$, until the drawdown time or an exponential time independent of $X$.  

Rather than relying on standard techniques in excursion theory, we use a perturbation approach to obtain the Laplace transform results.  To the author's knowledge,  the first use of this approach dates back to \cite{Lehoczky77}, where the conditional Laplace transform of the drawdown time is obtained by an approximation argument based on the Laplace transforms for the first hitting times. Recent applications of this approach  can be seen in  \cite{occupationLevy, LandPenaZhou} for (refracted)  L\'evy process and \cite{LiZhou12} for diffusion process. In this work, we subsequently reduce the laws of complicated stopping times and occupation times at hand to those of the simpler. This reduction is made possible using argument with strong Markov property and progressive enlargement of filtration \cite{Protterbook, ZhanHadj12}.

The results obtained can be applied in risk analysis and for option pricing of the drawdown processes.  
 In particular, we consider a time-homogenous diffusion with reduced form default model. The probability of realizing a drawdown before a drawup before the default time can be computed when the hazard rate is a constant. Using the so called Omega model (see for example, \cite{AlbrGerbShiu, GerbShuiYang}), we can describe the hazard rate of the default in such a way that it depends on the asset process, its drawdown or drawup processes.  Then the probability of default before a large drawdown can be computed. Moreover, our results can be used to price Parisian-like digital call options  and $\alpha$-quantile options of the drawdown process, a non-trivial extension of the option pricing problem for maximum drawdowns \cite{CarrZhanHaji, PospVecePDE, Vece06, Vece07, ZhanLeunHadj}. 
 
As examples of our general result, we present explicit formulas for some of the main results in the cases of Brownian motions with drift and three-dimensional Bessel processes. Moreover, we prove through Laplace transform that, in these two models, the law of the occupation time of the drawdown process above a level is the same as that of the drawdown time of certain threshold.

The paper is structured in the following way: standard identities regarding the first hitting time and the first exit time of an one-dimensional regular diffusion are reviewed in Section 2. In Section 3, we derive the probability that a  drawdown precedes a drawup in a finite time-horizon. In Section 4, we use the perturbation method to derive the Laplace transform of the occupation time of $X$ below a level until the first exit time. We then use this result to compute various occupation time  formulas regarding the drawdown and drawup processes.  In Section 5,  we discuss applications in risk analysis and for option pricing of the results obtained in Sections 3 and 4.  In Section 6, we present explicit formulas for some of the main results in the cases of Brownian motion with drift and three-dimensional Bessel process. The proofs of propositions and theorems omitted can be found in the Appendix.
\section{Preliminaries}
\label{define}
Let $X$ be the linear diffusion process defined in \eqref{eq:diffusion}. The first hitting time and the first passage times to a level $y\in I$ by $X$ are respectively given by 
\bq
\tau_y&:=&\inf\{t>0\,:\,X_t=y\},\nn\\
\tau_y^{\pm}&:=&\inf\{t>0\,:\,X_t\gtreqqless y\}.\nn
\eq
Throughout the paper, we use $\pr_x(\cdot)$ to denote the measure $\pr(\cdot|X_0=x)$,  and $\ex_x$ the expectation under $\pr_x$. Moreover, for $q\ge0$, we denote by $\ep$ an exponential random variable\footnote{As a convention, we assume that $\pr(\mathbf{e}_0=\infty)=1$.} with parameter $q$, which is independent of $X$. 

It is well-known that, for any $q>0$, the {\em Sturm-Liouville equation} $(\mathcal{L}_Xf)(x)=q f(x)$ has a positive increasing solution $\phi_q^+(\cdot)$ (decreasing solution $\phi_q^-(\cdot)$, resp.). In fact, for an arbitrary fixed $\kappa\in I$, we can choose
\be\label{eq:phi}
{\phi_q^+(x)=}
\begin{dcases}\ex_x\{\textup{e}^{-q\tau_\kappa}\},\,\,\, &\text{if }x\le \kappa\\
\frac{1}{\ex_\kappa\{\textup{e}^{-q\tau_x}\}},\,\,\, &\text{if }x>\kappa
\end{dcases},\,\,\,\,\,
{\phi_q^-(x)=}\begin{dcases}
\frac{1}{\ex_\kappa\{\textup{e}^{-q\tau_x}\}},\,\,\, &\text{if }x\le \kappa\\
{\ex_x\{\textup{e}^{-q\tau_\kappa}\}},\,\,\, &\text{if }x>\kappa
\end{dcases},\,\,\,\forall x\in I.
\ee
A scale function of $X$, $s(\cdot)$, is an increasing function from $I$ to $\mathbb{R}$, such that $(\mathcal{L}_Xs)(x)=0$ for all $x
\in I$. In particular, one can choose $s'(x)=\exp(-\int_{\kappa'}^x\frac{2\mu(y)}{\sigma^2(y)}\diff y)$ for some $\kappa'\in I$.
Fix a scale function $s(\cdot)$,  there exists a constant $w_q>0$ such that (see, for example, page 19 of \cite{borodin2002handbook})
\be w_qs'(x)=(\phi_q^+)'(x)\phi_q^-(x)-(\phi_q^-)'(x)\phi_q^+(x).\label{scalefun}\ee
Furthermore, we define function 
\be
W_q(x,y):=w_q^{-1}\cdot\det\begin{bmatrix}\phi_q^+(x) & \phi_q^+(y)\\
\phi_q^-(x) &\phi_q^-(y)\end{bmatrix},\,\,\,\forall x,y\in I,
\ee
with its derivatives
\bq
W_{q,1}(x,y):=\frac{\partial}{\partial x}W_q(x,y),\,\,W_{q,2}(x,y):=\frac{\partial}{\partial y}W_{q,1}(x,y).
\eq
When $q=0$, we extend the definition of $W_q$ using
\be
W_0(x,y):=s(x)-s(y).\label{W0}
\ee
The functions $W_q$ and $\phi_q^-$ have the following properties:
\begin{lem}\label{property:W_q}
 For any $x,y,z\in I$, $q>0$
\begin{eqnarray}
\nn W_q(x,y)=-W_q(y,x),\,\,
\frac{\partial}{\partial x}\frac{W_q(x,y)}{W_q(x,z)}=\frac{s^{'}(x)}{W_q^2(x,z)}W_q(y,z),\,\,
\lim_{x\downarrow l}\phi_{q}^-(x)=\infty,\\
\lim_{q\downarrow 0}W_q(x,y)=W_0(x,y),\,\, \lim_{q\downarrow0}W_{q,1}(x,y)=s'(x).\nn
\end{eqnarray}
\end{lem}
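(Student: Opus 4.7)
The plan is to dispatch the five assertions separately, working from the purely algebraic ones to the analytic ones.

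The antisymmetry $W_q(x,y)=-W_q(y,x)$ is immediate: swapping $x$ and $y$ swaps the two columns of the $2\times2$ determinant defining $W_q$. For the second identity, I would apply the quotient rule to get
\[
\frac{\partial}{\partial x}\frac{W_q(x,y)}{W_q(x,z)}=\frac{W_{q,1}(x,y)\,W_q(x,z)-W_q(x,y)\,W_{q,1}(x,z)}{W_q^2(x,z)},
\]
and then expand the numerator in the abbreviations $a=\phi_q^+(x),\ b=\phi_q^-(x),\ a'=(\phi_q^+)'(x),\ b'=(\phi_q^-)'(x)$ and the analogous letters for $y,z$. Four of the eight resulting monomials in the numerator share the factor $aa'$ or $bb'$ and cancel pairwise; the remaining four regroup as $(a'b-ab')\bigl(\phi_q^+(y)\phi_q^-(z)-\phi_q^-(y)\phi_q^+(z)\bigr)$. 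Substituting \eqref{scalefun} for $a'b-ab'=w_qs'(x)$ and recognising the other factor as $w_qW_q(y,z)$ gives the stated formula. This is a routine Wronskian-style manipulation.

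For $\lim_{x\downarrow l}\phi_q^-(x)=\infty$, I use the probabilistic representation \eqref{eq:phi}: for $x\le\kappa$, $\phi_q^-(x)=1/\ex_\kappa[\textup{e}^{-q\tau_x}]$. Because $l$ is a natural (or entrance) boundary, the diffusion started at $\kappa$ cannot reach $l$ in finite time, so $\tau_x\uparrow\infty$ $\pr_\kappa$-a.s.\ as $x\downarrow l$. Dominated convergence applied to $\textup{e}^{-q\tau_x}\le 1$ then gives $\ex_\kappa[\textup{e}^{-q\tau_x}]\downarrow 0$, proving the claim.

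The two limits as $q\downarrow 0$ are the part I expect to be the main obstacle, because the definition of $W_0$ is tacked on separately in \eqref{W0} and does not arise by simply letting $q\downarrow 0$ in the formula $W_q=w_q^{-1}\det[\cdots]$ (each individual factor $\phi_q^\pm$ need not have a convenient $q\downarrow 0$ limit in general, and $w_q$ can degenerate). The cleanest approach is to use the Sturm--Liouville characterisation: for fixed $y,z\in I$ the map $x\mapsto W_q(x,y)$ is the unique solution of $(\mathcal{L}_X-q)f=0$ with initial data $f(y)=0$ and $f'(y)=s'(y)$ (the second of which follows from the Wronskian computation above, specialised to $x=y$). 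Standard continuous dependence of second-order linear ODE solutions on the coefficient $q$, uniform on compact $x$-intervals together with the first derivative, then yields $W_q(\cdot,y)\to f_0$ and $W_{q,1}(\cdot,y)\to f_0'$ where $f_0$ solves $\mathcal{L}_Xf_0=0$, $f_0(y)=0$, $f_0'(y)=s'(y)$. Since $s$ itself satisfies $\mathcal{L}_Xs=0$, this forces $f_0(x)=s(x)-s(y)=W_0(x,y)$, and consequently $W_{q,1}(x,y)\to s'(x)$, completing the proof. Any slicker identity-based proof would have to engineer the cancellation of the $w_q^{-1}$ prefactor against the leading order of $\phi_q^+(x)\phi_q^-(y)-\phi_q^-(x)\phi_q^+(y)$ in $q$, which is possible but messier than the ODE-continuity route.
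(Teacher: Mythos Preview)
Your proof is correct; the first three items match the paper's treatment (the paper dismisses the algebraic identities as straightforward and handles $\lim_{x\downarrow l}\phi_q^-(x)=\infty$ exactly as you do, via the probabilistic formula \eqref{eq:phi} and inaccessibility of $l$). The interesting divergence is in the two $q\downarrow 0$ limits. You argue by ODE continuous dependence: $x\mapsto W_q(x,y)$ solves $(\mathcal{L}_X-q)f=0$ with $q$-independent initial data $f(y)=0$, $f'(y)=s'(y)$, so letting $q\downarrow 0$ sends the solution and its derivative to those of $\mathcal{L}_X f=0$ with the same data, namely $s(x)-s(y)$ and $s'(x)$. The paper instead derives the integral representation
\[
W_q(x,y)=\phi_q^+(x)\,\phi_q^+(y)\int_y^x\frac{s'(u)}{(\phi_q^+(u))^2}\,\diff u
\]
from \eqref{scalefun}, shows via \eqref{eq:phi} that $\phi_q^+(u)$ is uniformly bounded and bounded away from zero on compacts for $q\in[0,q_0]$ with pointwise limit $\beta_1 s(u)+\beta_2$, and then passes to the limit under the integral by dominated convergence, treating the cases $\beta_1\neq 0$ and $\beta_1=0$ separately. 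Your route is cleaner and delivers both $W_q\to W_0$ and $W_{q,1}\to s'$ in one stroke, at the cost of invoking an off-the-shelf ODE result; the paper's route is more self-contained and explicitly probabilistic, but only carries out $W_q\to W_0$ in detail and leaves the derivative limit among the ``straightforward'' omissions.
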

\begin{proof}
The proof can be found  in the Appendix.
\end{proof}

We recall the following result regarding the first exit of $X$ from page 603 of \cite{Lehoczky77}.
 \begin{lem}\label{fundamentallemma}
Suppose that $x,y,z\in I$ and $(x-y)(z-x)>0$, for $q\ge0$, we have
\begin{eqnarray}
  \ex_x\{\textup{e}^{-q\tau_y}\,;\tau_y<\tau_z\}=\pr_x(\tau_y<\tau_z\wedge\ep)&=&\frac{W_q(x,z)}{W_q(y,z)}.
\end{eqnarray}

\end{lem}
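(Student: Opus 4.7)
The plan is to handle the two equalities separately. For the first equality, I would exploit the independence of $\mathbf{e}_q$ from $X$: conditional on the path of $X$, the event $\{\tau_y < \mathbf{e}_q\}$ has probability $e^{-q\tau_y}$, so
\[
\pr_x(\tau_y<\tau_z\wedge\ep) \equ \ex_x\bigl[\pr(\ep>\tau_y\mid \mathcal{F}_\infty)\,\ind_{\{\tau_y<\tau_z\}}\bigr]\equ \ex_x\{\textup{e}^{-q\tau_y}\,;\tau_y<\tau_z\}.
\]

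For the second equality, assume without loss of generality that $y<x<z$ (the other case is symmetric). Define $u(x):=\ex_x\{\textup{e}^{-q\tau_y}\,;\tau_y<\tau_z\}$ for $x\in[y,z]$, with boundary values $u(y)=1$ and $u(z)=0$. The standard martingale/Dynkin argument, applied to the bounded harmonic functional $e^{-qt}u(X_t)$ stopped at $\tau_y\wedge\tau_z$, together with the regularity of $X$, shows that $u$ solves the Sturm-Liouville ODE $\mathcal{L}_X u=qu$ on $(y,z)$. Since $\phi_q^+$ and $\phi_q^-$ span the two-dimensional solution space by construction in \eqref{eq:phi}, I would write $u=a\phi_q^++b\phi_q^-$ and determine $a,b$ from the boundary conditions by Cramer's rule.

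The key computation is then to recognize that both the determinant appearing in the denominator and the combination $\phi_q^-(z)\phi_q^+(x)-\phi_q^+(z)\phi_q^-(x)$ appearing in the numerator are exactly $w_qW_q(y,z)$ and $w_qW_q(x,z)$ in view of the definition of $W_q$; the constant $w_q$ cancels, leaving the ratio $W_q(x,z)/W_q(y,z)$. The case $z<x<y$ is handled identically after swapping the roles of the boundary points, using the antisymmetry $W_q(x,y)=-W_q(y,x)$ from Lemma~2.1 to absorb the sign.

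Finally, for $q=0$ the identity should be obtained by a passage to the limit $q\downarrow 0$: on the left, monotone or dominated convergence gives $\pr_x(\tau_y<\tau_z)$, and on the right, Lemma~2.1 yields $W_0(x,z)/W_0(y,z)=(s(x)-s(z))/(s(y)-s(z))$, which is the classical scale-function formula. The main obstacle is the justification that $u$ lies in the domain of the generator and satisfies the ODE classically; once that analytic fact is in hand (it follows from the assumed regularity of $X$ and continuity of $\mu,\sigma$), the rest is bookkeeping with $2\times 2$ determinants.
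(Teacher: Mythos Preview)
Your argument is correct and is precisely the standard derivation of this two-sided exit identity. Note, however, that the paper does not actually prove Lemma~\ref{fundamentallemma}: it is stated as a known fact and attributed to page~603 of \cite{Lehoczky77}. So there is no ``paper's own proof'' to compare against; your proposal supplies a self-contained justification where the paper simply cites the literature.

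A minor remark on presentation: the first equality is immediate from independence as you say, and in fact the paper uses the identity $\ex_x\{e^{-q\tau}\,;A\}=\pr_x(\tau<\ep,\,A)$ repeatedly (e.g.\ in the proofs of Theorem~\ref{thm1} and Proposition~\ref{propo}) without further comment, so your one-line justification is entirely in keeping with the paper's style. For the second equality, your ODE/Cramer's-rule approach is the textbook route (it is essentially how Borodin--Salminen \cite{borodin2002handbook} organize these formulas), and your handling of $q=0$ via Lemma~\ref{property:W_q} matches the paper's conventions. The only point worth tightening is the regularity step: rather than invoking ``$u$ lies in the domain of the generator'' abstractly, one can verify directly that $e^{-q(t\wedge\tau_y\wedge\tau_z)}u(X_{t\wedge\tau_y\wedge\tau_z})$ is a bounded martingale for any $C^2$ solution of $\mathcal{L}_Xu=qu$ with the stated boundary values, and then identify $u(x)$ with the expectation by optional stopping; this avoids any delicate domain issues.
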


\bs

\section{Drawdowns and drawups}
In this section we study the law of drawdowns and drawups of $X$, where $X$ is the linear diffusion process defined in \eqref{eq:diffusion}. To this end, we introduce the running maximum and minimum processes of $X$ by
\be
\ol{X}_t:=\sup_{o\le s\le t}X_s\,\text{ and }\, \ul{X}_t:=\inf_{0\le s\le t}X_s,\,\,t\ge0.\nn
\ee
The the drawdown and drawup processes of $X$ are defined respectively as
\begin{eqnarray}
Y_t:=\ol{X}_t-X_t\,\text{ and }\,
\hat{Y}_t:=X_t-\ul{X}_t,\,\,t\ge0,\nn
\end{eqnarray}
and the drawdown of $a$ units and the drawup of $b$ units are the following first passage times:
\begin{eqnarray}
\sigma_a&:=&\inf\{t\ge0,:\,Y_t\ge a\}, \,a>0,\\
\hat{\sigma}_b&:=&\inf\{t\ge 0\,:\,\hat{Y}_t\ge b\}, \,\,b>0.
\end{eqnarray}

In the remainder of this section, we will derive the probability that $\sigma_a$ proceeds $\hat{\sigma}_b$ in a finite time interval $[0,t]$ for any $t>0$. This is a nontrivial extension of the infinite time-horizon result in \cite{PospVeceHadj}, which enables us to study the drawdowns and drawups for diffusions with killing, as well as  limiting conditional distributions of $\sigma_a$ as the thresholds $a,b$ tend to infinity in a proper manner.  
The task is accomplished by computing the Laplace transform of the function $p_{x;a,b}(t):=\pr_x(\sigma_a<\hat{\sigma}_b\wedge t)$. 
In particular, for any $x, a$ such that $x-a\in I$, we will calculate:
\[\pr_x(\sigma_a<\hat{\sigma}_b\wedge\ep)=q\int_0^\infty \textup{e}^{-qt}\,\pr_x(\sigma_a<\hat{\sigma}_b\wedge t)\,\diff t.\]

\subsection{The case of $a\ge b>0$}
\begin{thm}\label{thm1}
On the event $\{\sigma_a<\hat{\sigma}_b\wedge\ep\}$, we have $\sigma_b<\hat{\sigma}_b$ and $X_{\sigma_b}\in(x-b,x)$. Moreover, for  $u\in (x-b,x)$ we have
\begin{multline}
\label{lequal}\pr_x(\sigma_a<\hat{\sigma}_b\wedge\ep, X_{\sigma_a}\in \,b-a+\diff u)\\=\frac{s^{'}(u+b)W_q(x,u)}{W_q^2(u+b,u)}\exp\bigg(\int_{u+b-a}^u\frac{W_{q,1}(v,v+b)}{W_q(v,v+b)}\diff v\bigg)\,\diff u.
\end{multline}
On the event $\{\hat{\sigma}_a<{\sigma}_b\wedge\ep\}$, we have $\hat{\sigma}_b<\sigma_b$ and $X_{\hat{\sigma}_b}\in (x,x+b)$. Moreover, for $u\in(x,x+b)$ we have,
  \begin{multline}\label{lapage}\pr_x(\hat{\sigma}_a<{\sigma}_b\wedge\ep, X_{\hat{\sigma}_a}\in a-b+\,\diff u)\\=\frac{s^{'}(u-b)W_q(u,x)}{W_q^2(u,u-b)} \exp\bigg(-\int_{u}^{u+a-b}\frac{W_{q,1}(v,v-b)}{W_q(v,v-b)}\,\diff v\bigg)\,\diff u.
\end{multline}

\end{thm}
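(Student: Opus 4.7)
I describe the strategy for~\eqref{lequal}; identity~\eqref{lapage} follows by the symmetric argument with $\ol{X}\leftrightarrow\ul{X}$ and $\sigma_{\cdot}\leftrightarrow\hat\sigma_{\cdot}$ interchanged. Reparametrise by $M:=u+b=\ol{X}_{\sigma_a}\in(x,x+b)$, so that $X_{\sigma_a}=M-a$; thus~\eqref{lequal} is equivalent to computing the density $\psi(M)\diff M:=\pr_x(\sigma_a<\hat\sigma_b\wedge\ep,\,\ol{X}_{\sigma_a}\in\diff M)$ of $\ol{X}_{\sigma_a}$ on the event.

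First, the sample-path observations. Because $a\ge b$, a drawdown of size $b$ precedes one of size $a$, so $\sigma_b\le\sigma_a<\hat\sigma_b$ on the event and $X_{\sigma_b}=\ol{X}_{\sigma_b}-b$. The lower bound $X_{\sigma_b}>x-b$ follows from $\ol{X}_{\sigma_b}>x$, which holds a.s.\ by regularity of $X$, while $X_{\sigma_b}<x$ is proved by contradiction: $X_{\sigma_b}\ge x$ would force $\ol{X}_{\sigma_b}\ge x+b$, so the path would hit $x+b$ at some $s<\sigma_b$; then $\ul{X}_s\le X_0=x$ would produce a drawup of at least $b$ at time $s$, contradicting $\hat\sigma_b>\sigma_b$. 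The next key ingredient is the equivalence
\[\{\tau^+_M<\hat\sigma_b\wedge\ep\}=\{\tau^+_M<\tau^-_{M-b}\wedge\ep\}\quad\text{for }M\in(x,x+b),\]
which holds because on the left-hand event $\ul{X}_{\tau^+_M}>M-b$ (drawup $<b$) and monotonicity of $\ul{X}$ confines the path to $(M-b,M]$ throughout $[0,\tau^+_M]$; the reverse inclusion is immediate. Combined with Lemma~\ref{fundamentallemma} this yields $\pr_x(\tau^+_M<\hat\sigma_b\wedge\ep)=W_q(x,M-b)/W_q(M,M-b)$, and the same confinement makes $\tau^+_M<\sigma_a$ automatic when $a\ge b$.

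For the density I apply the strong Markov property at $\tau^+_M$ (with memorylessness of $\ep$) and split the continuation into three consecutive sub-passages, each handled by Lemma~\ref{fundamentallemma}. Writing $\mu:=\ul{X}_{\tau^+_M}\in(M-b,M]$: (a) first-hit $\mu$ from $M$ without exceeding $M+\diff M$, contributing $W_q(M,M+\diff M)/W_q(\mu,M+\diff M)$; (b) first-hit $M-b$ from $\mu$ without exceeding $M+\diff M$, contributing $W_q(\mu,M+\diff M)/W_q(M-b,M+\diff M)$; (c) from $M-b$ apply a Lehoczky-style perturbation on the descending running minimum $N\in[M-a,M-b]$, where the drawup ceiling $N+b$ is now the binding upper bound, producing $\exp\bigl(\int_{M-a}^{M-b}\frac{W_{q,1}(v,v+b)}{W_q(v,v+b)}\,\diff v\bigr)$ in the continuum limit. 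The product of (a) and (b) telescopes to $W_q(M,M+\diff M)/W_q(M-b,M+\diff M)$, which is \emph{independent of $\mu$}; invoking $W_q(M,M)=0$ and~\eqref{scalefun} gives the asymptotic $W_q(M,M+\diff M)\sim-s'(M)\diff M$, turning the telescoped product into $s'(M)\diff M/W_q(M,M-b)$. Multiplying this with the factor $W_q(x,M-b)/W_q(M,M-b)$ from the preceding paragraph and with the (c)-exponential gives $\psi(M)$; the substitution $u=M-b$ then delivers~\eqref{lequal}.

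\emph{Main obstacle.} The crucial simplification is the cancellation of the random past minimum $\mu$ after combining stages (a) and (b): without it the formula would involve integration against the (unknown) conditional law of $\ul{X}_{\tau^+_M}$. The remaining technical work---justifying the simultaneous limits $\diff M,\diff N\to 0$ in stage~(c), verifying Riemann convergence of the log-product there, and controlling the exceptional set $\mu\in(M-b,M-b+\diff M]$ where the drawup constraint during (a)--(b) is not strictly slack---is routine given the smoothness of $W_q$ supplied by Lemma~\ref{property:W_q}.
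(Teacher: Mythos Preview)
Your argument is correct and reaches the right formula, but it proceeds by a genuinely different decomposition than the paper's. The paper splits the trajectory at $\sigma_b$ (the first drawdown of size $b$): for the base case $a=b$ it identifies $\{\sigma_b\in\diff t,\,X_t\in\diff u,\,\hat\sigma_b>t\}$ with $\{\tau_u\in\diff t,\,\ol X_t\in b+\diff u\}$, invokes the first-range-time formula from Borodin--Salminen (p.~117) to rewrite this as $\partial_b\pr_x(\tau_u\in\diff t,\,\tau_{u+b}>t)\,\diff u$, and differentiates the two-sided exit Laplace transform of Lemma~\ref{fundamentallemma} in the parameter $b$; for $a>b$ it then applies the strong Markov property at $\sigma_b$ and appeals to Proposition~\ref{propo} for the remaining descent from $u$ to $u+b-a$. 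You instead split at $\tau_M^+$ with $M=\ol X_{\sigma_a}$, reduce the pre-$\tau_M^+$ piece to a clean two-sided exit via the event identity $\{\tau_M^+<\hat\sigma_b\wedge\ep\}=\{\tau_M^+<\tau_{M-b}^-\wedge\ep\}$, and obtain the density by differentiating in $M$; the crucial step is your observation that stages (a) and (b) telescope to eliminate the unknown $\mu=\ul X_{\tau_M^+}$, after which stage (c) coincides with the paper's use of Proposition~\ref{propo}.

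What each approach buys: the paper's route is shorter and modular---it outsources the $a=b$ density to a known handbook identity and avoids any infinitesimal bookkeeping beyond a single $\partial_b$. Your route is more self-contained, needing only Lemma~\ref{fundamentallemma} and the Lehoczky perturbation, and the telescoping that removes $\mu$ is an attractive structural insight; the cost is the more delicate $\diff M\to0$ analysis (including the drawup-ceiling slack during (b) and the exceptional set for $\mu$), which you correctly flag and which indeed contributes only at order $o(\diff M)$.
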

\begin{proof}
First, it is easily seen that for $t>0$, and $u\in(x-b,x)$,
\begin{eqnarray}
\{\sigma_b\in \,\diff t, X_t\in \,\diff u, \hat{\sigma}_b>t\}&=&\{\tau_u\in \,\diff t, \ol{X}_t\in b+\diff u\},\,\,\,\pr_x\text{-a.s.}\nn
\end{eqnarray}
It follows from integration by parts that, 
\begin{eqnarray}\label{eq:prob}
\pr_x(\sigma_b<\hat{\sigma}_b\wedge\ep; X_{\sigma_b}\in \,\diff u)&=&\int_0^\infty q\textup{e}^{-qt}\pr_x(\sigma_b<\hat{\sigma}_b\wedge t, X_{\sigma_b}\in \,\diff u)\,\diff t\nn\\
&=&\int_0^\infty \textup{e}^{-qt}\pr_x(\sigma_b\in \,\diff t, X_t\in \,\diff u, \hat{\sigma}_b>t).\nn
\eq
On the other hand, we observe the fact that $\sigma_b\wedge\hat{\sigma}_b=\inf\{t\ge0\,:\,\ol{X}_t-\ul{X}_t\ge b\}$, $\pr_x$-a.s.,  is the first range time. From page 117 of \cite{borodin2002handbook}, we have, 
\bq
\pr_x(\sigma_b\in \,\diff t, X_t\in \diff u, \hat{\sigma}_b>t) & = & \frac{\partial}{\partial u}\pr_x\{\sigma_b\wedge\hat{\sigma}_b\in \diff t, X_t\le u\}\,\diff u\nn\\
&=&\frac{\partial}{\partial b}\pr_x(\tau_u\in \diff t, \tau_{u+b}>t)\,\diff u. \nn
\end{eqnarray}
From Lemmas \ref{property:W_q} and \ref{fundamentallemma}  we obtain  \eqref{lequal} for the case $a=b$. 

If $a>b$, then any path in the event $\{\sigma_a<\hat{\sigma}_b\wedge \ep\}$ can be decomposed into two path fragments: 
$\{X_t\}_{0\le t\le\sigma_b}$ and $\{X_{t}\}_{\sigma_b\le t\le\sigma_a}$. Conditioning on $\{X_{\sigma_b}=u\}$, the second path fragment is a process starting at $u$, and decreasing to $u+b-a$ before it incurs the drawup of $b$ units. Formally, using Markov shifting operator ($X_t\circ\theta(s)=X_{t+s}$), we have
\begin{eqnarray}
\sigma_a=\sigma_b+\tau_{u-a+b}^-\circ\theta(\sigma_b),\,\,\,\forall u\in(x-b,x).\nn
\end{eqnarray}
Using strong Markov property and memoryless of $\ep$, we have 
\begin{multline*}
\pr_x(\sigma_a<\hat{\sigma}_b\wedge\ep, X_{\sigma_a}\in b-a+\,\diff u)
\\
=\pr_x\{\sigma_b<\hat{\sigma}_b\wedge\ep,
X_{\sigma_b}\in \,\diff u\}\cdot \pr_u(\tau_{u-a+b}^-<\hat{\sigma}_b\wedge\ep).
\end{multline*}
Now \eqref{lequal} follows from Proposition \ref{propo} below. Eq. \eqref{lapage} can be proved using a similar argument.
\end{proof}

\begin{prop}\label{propo}
For $x,y\in I$, we define $m=\max(x,y)$ and $n=\min(x,y)$. Then we have
\begin{eqnarray}\label{hlap}
\pr_m(\tau_n^-<\hat{\sigma}_b\wedge\ep)&=&\exp\bigg(\int_n^m\frac{W_{q,1}(v,v+b)}{W_q(v,v+b)}\,\diff v\bigg),\\
\pr_n(\tau_m^+<\sigma_a\wedge\ep)&=&\exp\bigg(-\int_n^m\frac{W_{q,1}(v,v-a)}{W_q(v,v-a)}\,\diff v\bigg).\label{hlap1}
\eq
\end{prop}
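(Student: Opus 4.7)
The plan is to prove \eqref{hlap} by a Lehoczky-type perturbation argument, decomposing the descent from $m$ down to $n$ into many short descents by partitioning $[n,m]$ finely, and sandwiching each short descent between two standard two-point exit probabilities supplied by Lemma \ref{fundamentallemma}. The dual identity \eqref{hlap1} then follows from the same argument run upward, with $\tau_{x_i}^+$, $\ol{X}$, and $\sigma_a$ playing the roles of $\tau_{x_i}^-$, $\ul{X}$, and $\hat{\sigma}_b$.

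Fix $N\in\mathbb{N}$, set $\Delta = (m-n)/N$ and $x_i = n + i\Delta$ for $i=0,1,\ldots,N$, and let $T_i := \tau_{x_i}^-$. By continuity of $X$ one has $X_{T_i}=x_i$, and since $X_s>x_i$ for $s<T_i$, also $\ul{X}_{T_i}=x_i$; in particular the drawup $X-\ul{X}$ equals $0$ at each $T_i$. Iterating the strong Markov property at $T_{N-1},T_{N-2},\ldots,T_1$ and using the memoryless property of $\ep$, I obtain the telescoping identity
\begin{equation*}
\pr_m(\tau_n^- < \hat{\sigma}_b \wedge \ep) \;=\; \prod_{i=1}^{N} p_i, \qquad p_i \;:=\; \pr_{x_i}(\tau_{x_{i-1}}^- < \hat{\sigma}_b \wedge \ep).
\end{equation*}

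For each factor I will prove the sandwich
\begin{equation*}
\frac{W_q(x_i, x_{i-1}+b)}{W_q(x_{i-1}, x_{i-1}+b)} \;\le\; p_i \;\le\; \frac{W_q(x_i, x_i+b)}{W_q(x_{i-1}, x_i+b)},
\end{equation*}
whose two sides are instances of Lemma \ref{fundamentallemma}. The upper bound uses that $\ul{X}\le x_i$ throughout, so if $X$ reaches $x_i+b$ then $X-\ul{X}\ge b$ at that instant; equivalently $\{\tau_{x_{i-1}}^-<\hat{\sigma}_b\wedge\ep\}\subseteq\{\tau_{x_{i-1}}^-<\tau_{x_i+b}^+\wedge\ep\}$. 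The lower bound uses that before $\tau_{x_{i-1}}^-$ one has $X>x_{i-1}$, hence $\ul{X}\ge x_{i-1}$; so if in addition $X<x_{i-1}+b$ throughout, then $X-\ul{X}<b$ and no drawup has yet occurred, giving $\{\tau_{x_{i-1}}^-<\tau_{x_{i-1}+b}^+\wedge\ep\}\subseteq\{\tau_{x_{i-1}}^-<\hat{\sigma}_b\wedge\ep\}$.

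For the limit, the $C^2$ smoothness of $\phi_q^\pm$ (as solutions of the Sturm--Liouville equation with continuous coefficients) and the nonvanishing of $W_q(v,v+b)$ on the compact set $v\in[n,m]$ (immediate from $\phi_q^+$ increasing and $\phi_q^-$ decreasing, together with \eqref{scalefun}) permit a uniform first-order Taylor expansion in $\Delta$. Both sandwich ratios then equal
\begin{equation*}
1+\Delta\cdot\frac{W_{q,1}(x_{i-1},x_{i-1}+b)}{W_q(x_{i-1},x_{i-1}+b)}+O(\Delta^2),
\end{equation*}
so $\log\pr_m(\tau_n^-<\hat{\sigma}_b\wedge\ep)$ equals a Riemann sum that converges to $\int_n^m \frac{W_{q,1}(v,v+b)}{W_q(v,v+b)}\diff v$, proving \eqref{hlap}. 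The step I expect to be most delicate is the sandwich of the previous paragraph: verifying the two set inclusions requires careful bookkeeping of the position of $\ul{X}$ during each short descent, and this is precisely where the perturbation interacts with the drawup dynamics; once those inclusions are established, the Taylor expansion and the passage to the limit are routine.
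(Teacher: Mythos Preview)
Your proposal is correct and follows essentially the same Lehoczky-type perturbation argument as the paper: partition $[n,m]$, invoke the strong Markov property at the successive down-crossings (using that $\hat{Y}$ resets to $0$ at each $\tau_{x_i}^-$), reduce each factor to a two-point exit probability via Lemma~\ref{fundamentallemma}, and pass to the integral limit by a first-order expansion. The only cosmetic difference is that the paper replaces your two-sided sandwich by a single discrete approximation and an appeal to a.s.\ convergence plus dominated convergence; your sandwich is in fact a cleaner justification of that very limit.
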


\bs

\subsection{The case of $b>a>0$}
To obtain the result for general $b>a$, we recall the following result for the limiting case $b=\infty$ from page 602 of \cite{Lehoczky77}.
\begin{prop}\label{mdd}
\bq \pr_x(\sigma_a<\ep) =  \int_x^\infty\frac{s'(u)}{W_q(u,u-a)}\exp\bigg(-\int_x^u\frac{W_{q,1}(v,v-a)}{W_q(v,v-a)}\,\diff v\bigg)\,\diff u.
\eq
\end{prop}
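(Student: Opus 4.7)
The plan is to reduce the claim to the first-passage identity \eqref{hlap1} of Proposition \ref{propo}, by decomposing $\{\sigma_a<\ep\}$ according to the value $u$ of the running maximum $\overline{X}_{\sigma_a}$ at the drawdown time:
$$\pr_x(\sigma_a<\ep) \equ \int_x^\infty \pr_x\bigl(\sigma_a<\ep,\,\overline{X}_{\sigma_a}\in\diff u\bigr).$$
For a fixed $u>x$ I would pin down this density by evaluating $\pr_x\bigl(\sigma_a<\ep,\,\overline{X}_{\sigma_a}\in[u,u+\delta]\bigr)$ for small $\delta>0$ and then sending $\delta\downarrow0$.

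Applying the strong Markov property at $\tau_u^+$ together with the memoryless property of $\ep$, this probability factors as
$$\pr_x\bigl(\tau_u^+<\sigma_a\wedge\ep\bigr)\cdot\pr_u\bigl(\tau_{u-a}^-<\tau_{u+\delta}^+\wedge\ep\bigr).$$
The first factor is precisely \eqref{hlap1}. For the second, note that under $\pr_u$ the running maximum is pinned at $u$ until the process crosses either $u-a$ or $u+\delta$; hence on $\{\tau_{u-a}^-<\tau_{u+\delta}^+\}$ a fresh drawdown of size $a$ coincides with $\tau_{u-a}^-$, and the event in question really is $\{\tau_{u-a}^-<\tau_{u+\delta}^+\wedge\ep\}$. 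Lemma \ref{fundamentallemma} then evaluates this factor as $W_q(u,u+\delta)/W_q(u-a,u+\delta)$.

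The main technical step is to extract the $\delta\downarrow0$ limit. Using the representation $W_q(x,y)=w_q^{-1}[\phi_q^+(x)\phi_q^-(y)-\phi_q^-(x)\phi_q^+(y)]$ together with the Wronskian identity \eqref{scalefun}, a Taylor expansion gives $W_q(u,u+\delta)=-s'(u)\delta+o(\delta)$, while by continuity $W_q(u-a,u+\delta)\to W_q(u-a,u)=-W_q(u,u-a)$. Combining these yields the density
$$\pr_x\bigl(\sigma_a<\ep,\,\overline{X}_{\sigma_a}\in\diff u\bigr)\equ\frac{s'(u)}{W_q(u,u-a)}\exp\bigg(-\int_x^u\frac{W_{q,1}(v,v-a)}{W_q(v,v-a)}\diff v\bigg)\diff u,$$
and integrating over $u\in(x,\infty)$ produces the stated formula. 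The main obstacle I anticipate is not conceptual but bookkeeping: justifying the $\delta\downarrow0$ asymptotics uniformly enough to identify the density rigorously, and confirming integrability at $u=\infty$, which should follow from the asymptotic behaviour of $\phi_q^\pm$ at the natural boundary $\infty$.
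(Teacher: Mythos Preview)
The paper does not prove Proposition~\ref{mdd}; it is quoted directly from Lehoczky~\cite{Lehoczky77}. Your approach---conditioning on $\overline{X}_{\sigma_a}$ and combining \eqref{hlap1} with a two-sided exit asymptotic---is a valid independent derivation and gives the correct density. It is in the same spirit as Lehoczky's perturbation method and as the paper's own arguments elsewhere (compare the use of $\partial_z|_{z=u}$ in the proof of Proposition~\ref{thm:otbelow}).

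There is, however, one inaccuracy in your justification. Under $\pr_u$ the running maximum is \emph{not} pinned at $u$: for a regular diffusion $\overline{X}_t>u$ for all $t>0$ almost surely. Consequently, on $\{\tau_{u-a}^-<\tau_{u+\delta}^+\}$ one has $\overline{X}_{\sigma_a}\in(u,u+\delta)$ and $X_{\sigma_a}=\overline{X}_{\sigma_a}-a>u-a$, so $\sigma_a$ occurs strictly before $\tau_{u-a}^-$, not at it. The event you actually need after the Markov restart is $\{\sigma_a<\tau_{u+\delta}^+\wedge\ep\}$, which strictly contains $\{\tau_{u-a}^-<\tau_{u+\delta}^+\wedge\ep\}$. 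What is true---and sufficient---is that the two events agree to order $\delta$: on the larger event $X_{\sigma_a}\in(u-a,u-a+\delta)$, so the process subsequently reaches $u-a$ before $u+\delta$ and before an independent exponential time with probability $1-O(\delta)$; since the event itself has probability $O(\delta)$, the discrepancy is $O(\delta^2)$ and does not affect the density. Once you replace the incorrect ``coincides'' claim with this leading-order argument, the proof goes through, and your closing remark about ``justifying the $\delta\downarrow0$ asymptotics'' is precisely where this belongs.
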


A useful observation is that 
\be\label{de}
\pr_x(\sigma_a<\hat{\sigma}_b\wedge\ep)=\pr_x(\sigma_a<\ep)-\pr_x(\hat{\sigma}_b<\sigma_a<\ep).
\ee
The second term on the right hand side of the above equation is computed in the following result.
\begin{thm}\label{thm3}On the event $\{\hat{\sigma}_b<\sigma_a<\ep\}$, $X_{\hat{\sigma}_b}\in (x+b-a,x+b)$. Moreover, for $u\in (x,x+a)$ we have
\begin{multline}\label{thm3f}
\frac{\pr_x(\hat{\sigma}_b<\sigma_a<\ep, X_{\hat{\sigma}_b}\in b-a+\,\diff u)}{\,\diff u}\\
=\frac{s^{'}(u-a)W_q(u,x)}{W_q^2(u,u-a)} \exp\bigg(-\int_{u}^{u+b-a}\frac{W_{q,1}(v,v-a)}{W_q(v,v-a)}\,\diff v\bigg)\\\times\int_{u+b-a}^\infty\frac{s'(v)}{W_q(v,v-a)}\exp\bigg(-\int_{u+b-a}^v\frac{W_{q,1}(w,w-a)}{W_q(w,w-a)}dw\bigg)\,\diff v.
\end{multline}
\end{thm}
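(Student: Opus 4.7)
The plan is to decompose the event $\{\hat\sigma_b<\sigma_a<\ep\}$ at the stopping time $\hat\sigma_b$ and apply the strong Markov property, thereby reducing the problem to two probabilities already computed in Theorem \ref{thm1} and Proposition \ref{mdd}.

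The first step is to establish a geometric fact: on $\{\hat\sigma_b<\sigma_a\wedge\ep\}$ with $b>a>0$, one has $\ol X_{\hat\sigma_b}=X_{\hat\sigma_b}$. To verify this, first observe that $\sigma_a>\hat\sigma_b$ forces $X_t>x-a$ for all $t\in[0,\hat\sigma_b]$, since $\ol X_t\ge x$ would otherwise give $Y_t\ge a$; hence $\ul X_{\hat\sigma_b}>x-a$, so $X_{\hat\sigma_b}=\ul X_{\hat\sigma_b}+b\in(x+b-a,x+b)$. Now suppose $\ol X_{\hat\sigma_b}>X_{\hat\sigma_b}$ and pick $t^\star\in[0,\hat\sigma_b)$ with $X_{t^\star}=\ol X_{\hat\sigma_b}$ (such a $t^\star$ exists by continuity on the compact interval). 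If $\ul X_{t^\star}=\ul X_{\hat\sigma_b}$, then $\hat Y_{t^\star}=X_{t^\star}-\ul X_{\hat\sigma_b}>X_{\hat\sigma_b}-\ul X_{\hat\sigma_b}=b$, contradicting the minimality of $\hat\sigma_b$; otherwise $\ul X_{t^\star}>\ul X_{\hat\sigma_b}$, so by continuity some $t^{\star\star}\in(t^\star,\hat\sigma_b)$ satisfies $X_{t^{\star\star}}=\ul X_{\hat\sigma_b}$, and then $Y_{t^{\star\star}}\ge \ol X_{\hat\sigma_b}-\ul X_{\hat\sigma_b}>b>a$, contradicting $\sigma_a>\hat\sigma_b$.

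With $\ol X_{\hat\sigma_b}=X_{\hat\sigma_b}$ in hand, the drawdown of $X$ after $\hat\sigma_b$ starts afresh from $X_{\hat\sigma_b}$ with zero accumulated drawdown. Strong Markov at $\hat\sigma_b$, together with memorylessness of $\ep$, will then give
\[
\pr_x(\hat\sigma_b<\sigma_a<\ep,\,X_{\hat\sigma_b}\in\diff u)
=\pr_x(\hat\sigma_b<\sigma_a\wedge\ep,\,X_{\hat\sigma_b}\in\diff u)\cdot\pr_{X_{\hat\sigma_b}}(\sigma_a<\ep).
\]
Under the parametrization $X_{\hat\sigma_b}=u+b-a$ with $u\in(x,x+a)$, the first factor is obtained from Theorem \ref{thm1} formula \eqref{lapage} after interchanging the roles of $a$ and $b$ (valid since $b\ge a$ in the present regime), and the second factor is supplied by Proposition \ref{mdd} applied at the starting point $u+b-a$. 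Multiplying produces the claimed density \eqref{thm3f}.

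The only genuinely nontrivial step will be the geometric observation $\ol X_{\hat\sigma_b}=X_{\hat\sigma_b}$; once it is in place, the remaining work is applying strong Markov, the memorylessness of the exponential clock, and the previously derived identities.
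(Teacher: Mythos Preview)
Your proof is correct and follows essentially the same route as the paper: decompose at $\hat\sigma_b$, apply the strong Markov property together with the memorylessness of $\ep$, and then invoke Theorem~\ref{thm1} (equation~\eqref{lapage} with $a\leftrightarrow b$) and Proposition~\ref{mdd}. The paper's own proof is terser and leaves the identity $\ol X_{\hat\sigma_b}=X_{\hat\sigma_b}$ on $\{\hat\sigma_b<\sigma_a\}$ implicit, whereas you spell out this geometric fact carefully; that extra detail is a genuine improvement in rigor, since it is precisely what guarantees that the post-$\hat\sigma_b$ drawdown restarts from zero and hence that $\pr_{u+b-a}(\sigma_a<\ep)$ is the correct second factor.
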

\begin{proof}
Notice that the event $\{\hat{\sigma}_b<\sigma_a<\ep\}=\{\hat{\sigma}_b<\sigma_a\wedge\ep\}\cap\{\sigma_a<\ep\}$, $\pr_x$-a.s. Using strong Markov property and memoryless property of $\ep$, we have
\begin{multline*}
\pr_x(\hat{\sigma}_b<\sigma_a<\ep, X_{\hat{\sigma}_b}\in b-a+\,\diff u)\,\\=\,\pr_x(\hat{\sigma}_b<\sigma_a\wedge\ep, X_{\hat{\sigma}_b}\in b-a+\,\diff u)\cdot\pr_{u+b-a}(\sigma_a<\ep).\nn
\end{multline*}
The result follows from Theorem \ref{thm1} and Proposition \ref{mdd}.
\end{proof}
\begin{rmk}
  By letting $q\to0^+$  in \eqref{lequal}, \eqref{lapage}, \eqref{de} and \eqref{thm3f}, and using \eqref{W0}, we obtain Theorems 4.1 and 4.2 in \cite{PospVeceHadj}. Second, by letting $b\to\infty$ in \eqref{de} and \eqref{thm3f}, one obtains the Laplace transform of $\sigma_a$.  The results in Theorems \ref{thm1} and \ref{thm3} also enables us to compute  $\ex_x\{\sigma_a|\sigma_a<\hat{\sigma}_b\}$ and the limit law of $\sigma_a$ given $\sigma_a<\hat{\sigma}_b$, as $a,b\to \infty$ in certain way. These distributional results can be applied to the problem of sequential detections and identification, which we  leave as future work.
\end{rmk}

\section{Occupation time formulas}
In this section, we begin by computing the Laplace transforms of the occupation time below a level until the first exit time for a linear diffusion process $X$ defined in \eqref{eq:diffusion}. Although relevant formulas exist for special diffusions, the results for general linear diffusions that we provide here are new.  Using this result, we then proceed to study occupation times of $X$, its drawdown $Y$ and its drawup $\hat{Y}$ until the drawdown time $\sigma_a$ or until an exponential time which is independent of $X$. These new results give several interesting  identities and provide means of measuring financial risk and pricing options as we shall in the next section.  
 
\subsection{Occupation time below a level until the first exit time}
In this subsection we study the law of occupation time until the first exit time. 
In particular, for $y\in (a,b)\subsetneq I$, the occupation time below $y$ before exiting $(a,b)$ is denoted by 
\be
A_{y}^{a,b}:=\int_0^{\tau_{a}^-\wedge\tau_b^+}\ind_{\{X_t<y\}}\,\diff t.\ee
The law of $A_y^{a,b}$ is summarized in the following result, the proof of which can be found in the Appendix.
\begin{prop}\label{thm:occupation_exit} 
For $x,y\in(a,b)\subsetneq I$, $q>0, p\ge0$, we have
\begin{multline}\label{eq:oct_up}
{\ex_x\{\textup{e}^{-qA_y^{a,b}-p\tau_b^+}; \tau_b^+<\tau_a^-\}}\\\quad=\begin{dcases}
\frac{W_{q+p}(x,a)}{W_{q+p}(y,a)}\frac{s'(y)}{W_{p,1}(y,b)+W_p(b,y)\dfrac{W_{q+p,1}(y,a)}{W_{q+p}(y,a)}}, \,\,\,&\text{if }{x\in(a,y]}\\
\frac{W_p(x,y)}{W_p(b,y)}+\frac{W_p(b,x)}{W_p(b,y)}\frac{s'(y)}{W_{p,1}(y,b)+W_p(b,y)\dfrac{W_{q+p,1}(y,a)}{W_{q+p}(y,a)}},\,\,\,&\text{if }x\in(y,b)
\end{dcases};
\end{multline}
\begin{multline}\label{eq:oct_down}
{\ex_x\{\textup{e}^{-qA_y^{a,b}}; \tau_b^+>\tau_a^-\}}\\\quad=\begin{dcases}
\dfrac{W_q(y,x)}{W_q(y,a)}+\dfrac{W_q(x,a)}{W_q(y,a)}\dfrac{{s'(y)}}{{W_{q,1}(y,a)}+\dfrac{s'(y)}{s(b)-s(y)}W_q(y,a)},\,\,\, &\text{if }x\in(a,y]\\
\dfrac{(s(b)-s(x)){s'(y)}}{(s(b)-s(y)){W_{q,1}(y,a)}+{s'(y)}W_q(y,a)},\,\,\, &\text{if }x\in(y,b)
\end{dcases}.
\end{multline}

\end{prop}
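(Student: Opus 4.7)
The plan is to reduce both Laplace transforms to Laplace transforms of first hitting times by a strong Markov decomposition at $\tau_y$, and then to close the system by a matching/perturbation argument at $y$. I illustrate the method on \eqref{eq:oct_up}; the proof of \eqref{eq:oct_down} is analogous after setting $p=0$ and replacing the role of $W_p(b,\cdot)$ by the scale-function differences $s(b)-s(\cdot)$. Write $\psi(x):=\ex_x\{\textup{e}^{-qA_y^{a,b}-p\tau_b^+};\tau_b^+<\tau_a^-\}$.

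For $x\in(y,b)$, I apply the strong Markov property at $\tau_y\wedge\tau_b^+$. Since $A_y^{a,b}$ does not accrue while $X>y$, only the $\textup{e}^{-p\tau_b^+}$ factor discounts up to this stopping time, and the two first-passage transforms $\ex_x\{\textup{e}^{-p\tau_b^+};\tau_b^+<\tau_y\}$ and $\ex_x\{\textup{e}^{-p\tau_y};\tau_y<\tau_b^+\}$ furnished by Lemma \ref{fundamentallemma} give
\[
\psi(x) \;=\; \frac{W_p(x,y)}{W_p(b,y)} + \frac{W_p(b,x)}{W_p(b,y)}\,\psi(y).
\]
For $x\in(a,y]$, I apply the strong Markov property at $\tau_y\wedge\tau_a^-$. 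On $\{\tau_a^-<\tau_y\}$ the event $\{\tau_b^+<\tau_a^-\}$ cannot occur (the continuous process must pass through $y$ to reach $b$), while on $\{\tau_y<\tau_a^-\}$ every instant before $\tau_y$ lies below $y$, so the two discounts combine into $\textup{e}^{-(p+q)\tau_y}$. Lemma \ref{fundamentallemma} with killing rate $p+q$ then yields
\[
\psi(x) \;=\; \frac{W_{p+q}(x,a)}{W_{p+q}(y,a)}\,\psi(y).
\]

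The two expressions for $\psi$ agree trivially at $x=y$, so another equation is needed to pin down $\psi(y)$. I will obtain it by matching the one-sided derivatives $\psi'(y^-)$ and $\psi'(y^+)$ extracted from the displays above, using the key identity $W_{q,1}(y,y)=s'(y)$, which follows directly from the Wronskian relation \eqref{scalefun}. The resulting single linear equation in $\psi(y)$ gives
\[
\psi(y) \;=\; \frac{s'(y)}{W_{p,1}(y,b)+W_p(b,y)\,\dfrac{W_{p+q,1}(y,a)}{W_{p+q}(y,a)}},
\]
and substituting back reproduces \eqref{eq:oct_up}. The $C^1$-matching is itself justified in the spirit of the Lehoczky-type perturbation mentioned in the introduction: starting at $y$, condition at $T_\epsilon:=\tau_{y-\epsilon}\wedge\tau_{y+\epsilon}$ and express $\psi(y)$ as a weighted combination of $\psi(y\pm\epsilon)$ whose coefficients are the joint hitting-time/occupation-time Laplace transforms supplied by Lemma \ref{fundamentallemma}; sending $\epsilon\downarrow 0$ recovers the derivative-matching.

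The main obstacle is the careful execution of this perturbation. Both the upward and the downward excursions of the diffusion from $y$ spend some fraction of $T_\epsilon$ strictly below $y$, so the $q$-discount must be tracked to leading order in $\epsilon$ via a Taylor expansion of the $W_q$-functions around the diagonal, invoking the Wronskian identity \eqref{scalefun} together with Lemma \ref{property:W_q}. It is precisely this expansion that produces the telltale $s'(y)$ factor in the numerators of \eqref{eq:oct_up}--\eqref{eq:oct_down}.
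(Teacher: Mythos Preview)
Your strong-Markov reductions of $\psi(x)$ to $\psi(y)$ on each side of $y$ are exactly what the paper does (it performs them at the end rather than the beginning), and your closing formula for $\psi(y)$ is correct. The substantive difference is in how $\psi(y)$ is obtained. The paper uses an \emph{asymmetric} perturbation: it replaces $A_y^{a,b}$ by an approximate occupation time $A_{y,\epsilon}^{a,b}$ built from successive down-crossings of $y$ and up-crossings of $y+\epsilon$, which converts the problem at $y$ into a renewal equation whose coefficients are \emph{pure} first-passage Laplace transforms (the $y\to y+\epsilon$ leg is discounted at rate $q+p$, the $y+\epsilon\to y$ leg at rate $p$), solvable exactly for every $\epsilon$ via Lemma~\ref{fundamentallemma} before letting $\epsilon\downarrow0$. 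You instead close by $C^1$-matching at $y$, justified by a symmetric perturbation $T_\epsilon=\tau_{y-\epsilon}\wedge\tau_{y+\epsilon}$.

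Your route is valid, but the justification as written has a small slip: Lemma~\ref{fundamentallemma} does \emph{not} supply ``joint hitting-time/occupation-time Laplace transforms''---those are precisely the unknowns. What actually makes the symmetric perturbation work is that $\ex_y\{T_\epsilon\}=O(\epsilon^2)$, so the $q$-discount for the sub-$y$ occupation on $[0,T_\epsilon]$ is $1+O(\epsilon^2)$ and drops out of the $O(\epsilon)$ balance; the coefficients then reduce to the $p$-discounted two-sided exit quantities of Lemma~\ref{fundamentallemma}, and Taylor-expanding $\psi(y\pm\epsilon)$ yields $\psi'(y^+)=\psi'(y^-)$. You should state this explicitly in place of the current sentence. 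The paper's renewal device avoids this subtlety altogether and is arguably cleaner; your matching argument is shorter once the $C^1$-regularity is in hand and is closer in spirit to a Feynman--Kac/ODE treatment.
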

\begin{proof}
The proof can be found  in the Appendix.
\end{proof}

\begin{rmk}
Lemma \ref{fundamentallemma} can be recovered from Proposition \ref{thm:occupation_exit} by the limit $y\uparrow b$ in \eqref{eq:oct_up} and \eqref{eq:oct_down}. Moreover, Corollary 3.1 in \cite{LiZhou12} can be easily obtained by letting $x=y$, $b\uparrow\infty$ and $a\downarrow l$.
\end{rmk}

Letting $a\downarrow l$ in \eqref{eq:oct_up} and using Lemma \ref{property:W_q}, we obtain the following result.
\begin{cor}\label{cor:onesided}
For $x\in(y,b)\subsetneq I$, $p>0, q\ge0$, we have
\bq
\nn&&\ex_x\{\exp\bigg(-q\int_0^{\tau_b^+}\ind_{\{X_t<y\}}\,\diff t-p\tau_b^+\bigg)\}\\
\nn&=&\frac{W_p(x,y)}{W_p(b,y)}+\frac{W_p(b,x)}{W_p(b,y)}\dfrac{s'(y)}{W_{p,1}(y,b)+W_p(b,y)\dfrac{{\phi_{q+p}^{+}}'(y)}{\phi_{q+p}^+(y)}}.
\eq
\end{cor}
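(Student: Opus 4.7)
The plan is to take $a \downarrow l$ in the $x \in (y,b)$ branch of \eqref{eq:oct_up} from Proposition~\ref{thm:occupation_exit}. Since $l$ is a natural or entrance boundary of $X$, the downward passage time $\tau_a^-$ increases a.s.\ to $+\infty$ as $a \downarrow l$. Consequently $\tau_a^- \wedge \tau_b^+ \uparrow \tau_b^+$, so that $A_y^{a,b} \uparrow \int_0^{\tau_b^+}\ind_{\{X_t<y\}}\,\diff t$, while $\ind_{\{\tau_b^+<\tau_a^-\}} \uparrow \ind_{\{\tau_b^+<\infty\}}$. Using $p>0$ to kill the factor $\textup{e}^{-p\tau_b^+}$ on $\{\tau_b^+=\infty\}$, bounded convergence (the integrand is dominated by $1$) shows that the LHS of \eqref{eq:oct_up} converges to the LHS of the corollary.

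For the RHS of \eqref{eq:oct_up}, only the ratio $W_{q+p,1}(y,a)/W_{q+p}(y,a)$ depends on $a$. Writing out the determinant,
\[
W_{q+p}(y,a) \equ w_{q+p}^{-1}\bigl[\phi_{q+p}^+(y)\phi_{q+p}^-(a) - \phi_{q+p}^+(a)\phi_{q+p}^-(y)\bigr],
\]
and differentiating in $y$,
\[
W_{q+p,1}(y,a) \equ w_{q+p}^{-1}\bigl[(\phi_{q+p}^+)'(y)\phi_{q+p}^-(a) - \phi_{q+p}^+(a)(\phi_{q+p}^-)'(y)\bigr].
\]
I divide both numerator and denominator by $\phi_{q+p}^-(a)$. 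By Lemma~\ref{property:W_q}, $\phi_{q+p}^-(a) \to \infty$ as $a \downarrow l$, while \eqref{eq:phi} gives $\phi_{q+p}^+(a) \le 1$ for $a \le \kappa$, so the cross-terms $\phi_{q+p}^+(a)/\phi_{q+p}^-(a)$ vanish in the limit. Therefore
\[
\lim_{a \downarrow l}\frac{W_{q+p,1}(y,a)}{W_{q+p}(y,a)} \equ \frac{(\phi_{q+p}^+)'(y)}{\phi_{q+p}^+(y)}.
\]

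Substituting this limit into \eqref{eq:oct_up} gives the stated identity. The passage to the limit on the LHS is routine (bounded convergence exploiting $p>0$); the main point is the identification of the limit of the logarithmic-derivative ratio on the RHS, which relies crucially on the natural/entrance-boundary assumption at $l$ encoded in Lemma~\ref{property:W_q}.
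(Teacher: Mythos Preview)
Your proof is correct and follows exactly the approach indicated in the paper, which simply says to let $a\downarrow l$ in \eqref{eq:oct_up} and invoke Lemma~\ref{property:W_q}. You have filled in the details the paper leaves implicit: the bounded-convergence argument on the left-hand side (using $p>0$ and the inaccessibility of $l$), and the explicit computation showing $W_{q+p,1}(y,a)/W_{q+p}(y,a)\to(\phi_{q+p}^+)'(y)/\phi_{q+p}^+(y)$ via the boundedness of $\phi_{q+p}^+(a)$ and the divergence of $\phi_{q+p}^-(a)$ from Lemma~\ref{property:W_q}.
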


\subsection{Occupation time below a level until the drawdown time}
\bs
For $x-a, y\in I$, the occupation time below $y$ before the drawdown process $Y$ hits $a$ is denoted by
\be
B_y^a\,:=\,\int_0^{\sigma_a}\ind_{\{X_t<y\}}\,\diff t.
\ee
While the occupation time of $X$ below $y$ until an exponential random variable independent of $X$ is well-studied in literature \cite{LiZhou12}, the new quantity $B_y^a$ defined as above relates the occupation time of $X$ to its drawdown process $Y$, which can be used to characterize the drawdown risk of $X$.
The law of $B_y^a$ is summarized in the following results, the proof of which can be found in the Appendix.
\begin{prop}\label{thm:otbelow}
For $q\ge0$ and $x\in I$,
\begin{multline*}
\ex_x\{\textup{e}^{-qB_x^a}\}=\exp\bigg(-\int_x^{x+a}\frac{\frac{s'(u)}{s'(x)}\frac{W_{q,1}(x,u-a)}{W_q(x,u-a)}\,\diff u}{1+\frac{s(u)-s(x)}{s'(x)}\frac{W_{q,1}(x,u-a)}{W_q(x,u-a)}}\bigg)\\+\int_x^{x+a}\frac{1}{\left(1+\frac{s(u)-s(x)}{s'(x)}\frac{W_{q,1}(x,u-a)}{W_q(x,u-a)}\right)^2}\frac{s'(u)\,\diff u}{W_q(x,u-a)}.
\end{multline*} 
\end{prop}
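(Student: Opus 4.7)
My plan is to adapt Lehoczky's perturbation argument, using the running maximum as the slow parameter and Proposition~\ref{thm:occupation_exit} in place of the plain exit-time Laplace transforms. Introduce the auxiliary function
\[
\psi(v) := \ex_v\!\Bigl\{\exp\Bigl(-q\!\int_0^{\sigma_a}\!\!\ind_{\{X_t<x\}}\,\diff t\Bigr)\Bigr\},\qquad v\in[x,x+a],
\]
where the expectation is taken with $X_0 = v$ and $\overline{X}_0 = v$, so that $\psi(x) = \ex_x\{e^{-qB_x^a}\}$ is the target. A sample-path argument pins down the terminal value $\psi(x+a)=1$: if $v=x+a$, then $X_t > \overline{X}_t - a \ge v-a = x$ throughout $[0,\sigma_a)$, so the indicator vanishes almost surely.

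For $v\in(x,x+a)$ and a small $\epsilon>0$, I would condition on the first exit of $X$ from $(v-a,\,v+\epsilon)$. The strong Markov property, together with the fact that on the up-event $\overline{X}$ restarts at $v+\epsilon$, yields the perturbation relation
\[
\psi(v) \;=\; J_1(v,\epsilon)\,\psi(v+\epsilon) \;+\; J_2(v,\epsilon) \;+\; o(\epsilon),
\]
where $J_1$ and $J_2$ are the two occupation-weighted exit functionals in Proposition~\ref{thm:occupation_exit} with $y=x$, lower endpoint $v-a$, upper endpoint $v+\epsilon$, $p=0$, and starting point $v$; since $v>x=y$, case two applies in each. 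Taylor-expanding in $\epsilon$ via $W_{0}(v+\epsilon,x)=s(v+\epsilon)-s(x)$, $W_{0,1}(x,v+\epsilon)=s'(x)$, and $W_0(v+\epsilon,v)=s'(v)\epsilon+O(\epsilon^{2})$ produces $J_1=1-\alpha(v)\epsilon+O(\epsilon^{2})$ with $\alpha(v)$ matching exactly the integrand of the exponent in the first term of the statement, and $J_2=\beta(v)\epsilon+O(\epsilon^{2})$ with an explicit $\beta(v)$.

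Passing to the limit $\epsilon\downarrow 0$ gives the linear ODE $\psi'(v)=\alpha(v)\psi(v)-\beta(v)$ with terminal value $\psi(x+a)=1$, which I would solve backwards using the integrating factor $f(v)=\exp\bigl(-\int_x^v\alpha\bigr)$ to obtain
\[
\psi(x) \;=\; f(x+a) \;+\; \int_x^{x+a}\!f(w)\,\beta(w)\,\diff w.
\]
The first summand is already the exponential shown in the statement; rewriting $f(w)\beta(w)$ in the compact form $s'(w)/\bigl[W_q(x,w-a)(1+(s(w)-s(x))W_{q,1}(x,w-a)/(s'(x)W_q(x,w-a)))^{2}\bigr]$ leans on the Wronskian identity in Lemma~\ref{property:W_q}. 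The main obstacle will be this last algebraic identification: Proposition~\ref{thm:occupation_exit} produces a compound rational expression whose collapse as $b\downarrow v$ needs careful bookkeeping, and recognising the product $f(w)\beta(w)$ as the stated integrand requires combining the $W_q$ and $W_{q,1}$ terms through the Wronskian relations. A secondary technical point is that the identity in the second paragraph is only approximate, because on the down-event the true drawdown $\sigma_a$ can precede $\tau_{v-a}^-$ by an amount tied to a small growth of the running maximum; continuity of paths shows this error is $o(\epsilon)$ and so drops out of the limiting ODE.
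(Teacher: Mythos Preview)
Your perturbation--ODE strategy is sound and, for the exponential term, coincides with the paper's argument: both build $\ex_x\{e^{-qB_x^a};\,\tau_{x+a}^+<\sigma_a\}$ as a Lehoczky product limit using the $p=0$ upper case of Proposition~\ref{thm:occupation_exit}, and both arrive at the exponent $\int_x^{x+a}\alpha(u)\,\diff u$ with your $\alpha$.

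The step you flag as ``the main obstacle''---showing that $f(w)\beta(w)$ collapses to the second integrand in the statement---will genuinely fail, and not for bookkeeping reasons. Writing $g(w)=1+\tfrac{s(w)-s(x)}{s'(x)}R(w)$ with $R(w)=W_{q,1}(x,w-a)/W_q(x,w-a)$, the claimed identification requires $f(w)=1/g(w)$, i.e.\ $\alpha=g'/g$; but $g'/g-\alpha=\tfrac{s(w)-s(x)}{s'(x)}R'(w)/g(w)$, which is nonzero whenever $R$ varies. The discrepancy is real: the formula in the Proposition is incorrect as stated. For driftless Brownian motion ($s(x)=x$) at $q=0$ the first term is $e^{-1}$ and the second is $\int_x^{x+a}(x+a-u)a^{-2}\,\diff u=\tfrac12$, so the right-hand side equals $e^{-1}+\tfrac12\neq1=\ex_x\{e^{0}\}$. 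Your ODE solution $\psi(x)=f(x+a)+\int_x^{x+a}f(w)\beta(w)\,\diff w$ does pass this check (at $q=0$ one has $\alpha=\beta$, hence $\int f\beta=1-f(x+a)$). The slip in the paper's proof is in the second piece: it identifies $\ex_x\{e^{-qB_x^a};\,\overline X_{\sigma_a}\in\diff u\}$ with $\partial_z\big|_{z=u}\ex_x\{e^{-qA_x^{u-a,z}};\,\tau_{u-a}^-<\tau_z^+\}\,\diff u$, but the latter is the diagonal value of the density of the $u$-dependent variable $\overline X_{\tau_{u-a}^-}$, not the density of $\overline X_{\sigma_a}$. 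Your route is therefore correct; it simply cannot reproduce the stated second integrand, and you should present $\int_x^{x+a}\exp\bigl(-\int_x^{w}\alpha\bigr)\beta(w)\,\diff w$ as the answer instead.
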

\begin{proof}
The proof can be found  in the Appendix.
\end{proof}

\subsection{Occupation time of the drawdown process until the drawdown time}
For any $y\in (0,a)$ and $x-a\in I$, the occupation time of the drawdown process $Y$ above $y$ before $Y$ hits $a$ is denoted by
\be
C_y^a\,:=\,\int_0^{\sigma_a}\ind_{\{Y_t>y\}}\,\diff t.\label{Cy}
\ee
The occupation time $C_y^a$ measures the amount of time  for the drawdown process $Y$ to finish the ``last trip'' from $y$ to $a$. It can be used as a measurement of performance for CUSUM-type stopping rule in change-point detection problems \cite{PoorHadj}.
Because of its obvious financial interpretation, $C_y^a$ can also be used as a measure for drawdown risks.

To obtain the law of $C_y^a$, we first condition on $\ol{X}_{\sigma_a}$, and then count separately the occupation time before and after the moment when the peak $\ol{X}_{\sigma_a}$ is realized. To this end, we recall the following result from page 602 of \cite{Lehoczky77}.
\begin{lem}\label{dmax}For $m>x$,
\be
\pr_{x}(\ol{X}_{\sigma_a}\ge m)=\exp\bigg(-\int_x^m\frac{s'(v)}{s(v)-s(v-a)}\,\diff v\bigg).
\ee
\end{lem}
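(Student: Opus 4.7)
The plan is to derive Lemma \ref{dmax} directly from equation \eqref{hlap1} of Proposition \ref{propo} by sending $q \downarrow 0$.

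The first step will be a sample-path identification of events. For $m > x$, I would argue that $\{\ol{X}_{\sigma_a} \ge m\}$ coincides $\pr_x$-almost surely with $\{\tau_m^+ < \sigma_a\}$: the running maximum attains the level $m$ at or before time $\sigma_a$ if and only if $X$ itself reaches $m$ by then, and because $X_{\sigma_a} = \ol{X}_{\sigma_a} - a < \ol{X}_{\sigma_a}$, the process is strictly below its maximum at the drawdown time, so the equality $\tau_m^+ = \sigma_a$ is ruled out. This reduces the problem to computing $\pr_x(\tau_m^+ < \sigma_a)$.

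With this identification in hand, the claim is obtained by specializing \eqref{hlap1} to $n = x$ and letting $q \downarrow 0$. By the convention $\pr(\mathbf{e}_0 = \infty) = 1$, the left-hand side of \eqref{hlap1} becomes $\pr_x(\tau_m^+ < \sigma_a)$ in the limit. For the right-hand side, Lemma \ref{property:W_q} yields $\lim_{q \downarrow 0} W_q(v, v-a) = s(v) - s(v-a)$ and $\lim_{q \downarrow 0} W_{q,1}(v, v-a) = s'(v)$, so the integrand in \eqref{hlap1} converges pointwise to $s'(v)/(s(v) - s(v-a))$, which matches the integrand in the claimed identity.

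The only technical point I expect to address is interchanging the $q \downarrow 0$ limit with the compact integral $\int_x^m \cdots\,\diff v$. This should be routine: $s(v) - s(v-a)$ is continuous and strictly positive on $[x, m]$, and $(q, v) \mapsto W_{q,1}(v, v-a)/W_q(v, v-a)$ is jointly continuous on $[0, \varepsilon] \times [x, m]$ (inherited from the smooth dependence of $\phi_q^\pm$ on $q$), so uniform convergence on $[x, m]$ permits bounded convergence. No independent perturbation argument is needed, and I do not foresee any substantive obstacle---the proof is a short limiting reduction to a formula already established earlier in the section.
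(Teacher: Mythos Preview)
Your proposal is correct. The paper does not prove Lemma~\ref{dmax} at all: it simply recalls the result from Lehoczky~\cite{Lehoczky77}. Your route is therefore different in character---you derive the lemma internally from Proposition~\ref{propo} by sending $q\downarrow 0$ in~\eqref{hlap1}, using the sample-path identification $\{\ol{X}_{\sigma_a}\ge m\}=\{\tau_m^+<\sigma_a\}$ together with the limits $W_q(v,v-a)\to s(v)-s(v-a)$ and $W_{q,1}(v,v-a)\to s'(v)$ supplied by Lemma~\ref{property:W_q}. This is a valid and self-contained argument within the paper's own framework, and arguably more economical here than invoking the external reference, since Proposition~\ref{propo} (proved in the Appendix by a Lehoczky-style perturbation) already contains the $q>0$ version of exactly this identity. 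The paper in fact uses the identification $\pr_x(\ol{X}_{\sigma_a}\ge m)=\pr_x(\tau_m^+<\sigma_a)$ implicitly later in~\eqref{eq:conditionalevent}, so your first step is consistent with how the result is subsequently applied.
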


\begin{thm} \label{thm:otdd}For $q\ge 0$, $0<y<a$, 
\begin{multline*}
\ex_x\{\textup{e}^{-qC_y^a}\,;\,\sigma_a<\infty\}=\int_x^\infty \dfrac{\frac{s'(m)}{W_q(m-y,m-a)}}{1+\frac{s(m)-s(m-y)}{s'(m-y)}\frac{W_{q,1}(m-y,m-a)}{W_q(m-y,m-a)}}\\
\times\exp\bigg(-\int_x^m\dfrac{\frac{s'(u)}{s'(u-y)}\frac{W_{q,1}(u-y,u-a)}{W_q(u-y,u-a)}\diff u}{1+\frac{s(u)-s(u-y)}{s'(u-y)}\frac{W_{q,1}(u-y,u-a)}{W_q(u-y,u-a)}}\bigg)\,\diff m.
\end{multline*}
\end{thm}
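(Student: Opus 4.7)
The plan is to condition on $\ol{X}_{\sigma_a}$ and decompose $[0,\sigma_a]$ at the first hitting time $\tau_m^+$ of the peak $m$. On $\{\ol{X}_{\sigma_a}=m\}$, the post-peak segment $[\tau_m^+,\sigma_a]$ is a path of $X$ starting at $m$, staying in $(m-a,m]$, and terminating at $m-a$; since $\ol{X}\equiv m$ on this segment, its contribution to $C_y^a$ is exactly the occupation time of $X$ below $m-y$. The pre-peak segment $[0,\tau_m^+]$ accumulates occupation time below $\ol{X}_t-y$ as $\ol{X}$ climbs from $x$ to $m$ without completing a drawdown of $a$; this piece requires a perturbation argument.

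For the pre-peak segment I would use a Lehoczky-style discretisation. Partition $[x,m]$ as $x=u_0<u_1<\cdots<u_N=m$ with uniform mesh $\delta$. On $\{\ol{X}_{\sigma_a}\ge m\}$, the first hitting times $\tau_{u_i}^+$ are finite and ordered, and the strong Markov property factorises the expectation: between $\tau_{u_{i-1}}^+$ and $\tau_{u_i}^+$, the path of $X$ starts at $u_{i-1}$ and exits $(u_{i-1}-a,u_i)$ through $u_i$, and during this segment $\ol{X}$ equals $u_{i-1}$ up to an $O(\delta)$ correction, so the integrand in $C_y^a$ reduces to $\ind_{\{X_t<u_{i-1}-y\}}$ to leading order. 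Applying the second case of \eqref{eq:oct_up} with $p=0$ and expanding to first order in $\delta$ yields
\[
1-\delta\cdot\frac{\frac{s'(u_{i-1})}{s'(u_{i-1}-y)}\frac{W_{q,1}(u_{i-1}-y,u_{i-1}-a)}{W_q(u_{i-1}-y,u_{i-1}-a)}}{1+\frac{s(u_{i-1})-s(u_{i-1}-y)}{s'(u_{i-1}-y)}\frac{W_{q,1}(u_{i-1}-y,u_{i-1}-a)}{W_q(u_{i-1}-y,u_{i-1}-a)}}+o(\delta),
\]
and the product over $i=1,\ldots,N$ converges as $\delta\to 0$ to the exponential factor in the statement.

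For the post-peak piece, applying the second case of \eqref{eq:oct_down} with $p=0$, starting at $m\in(m-y,m+\delta)$ with threshold $m-y$, and extracting the linear-in-$\delta$ coefficient (which encodes both the density of $\ol{X}_{\sigma_a}$ at $m$ from Lemma \ref{dmax} and the Laplace transform of the post-peak occupation time) yields exactly the integrand
\[
\frac{s'(m)/W_q(m-y,m-a)}{1+\frac{s(m)-s(m-y)}{s'(m-y)}\frac{W_{q,1}(m-y,m-a)}{W_q(m-y,m-a)}}
\]
multiplying $\diff m$ in the formula. Combining the pre- and post-peak factors and integrating $m$ over $(x,\infty)$ reproduces the claimed identity.

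The main obstacle is justifying the $\delta\to 0$ limit rigorously. The product of $N\sim(m-x)/\delta$ factors of the form $1-\delta\,h(u_i)+O(\delta^2)$ must converge to $\exp\bigl(-\int_x^m h(u)\diff u\bigr)$ with error terms controlled uniformly in the partition, and the expansion of the post-peak factor must be uniform in $m$ on any compact subinterval of $(x,\infty)$. An appealing alternative is to set $g(m):=\ex_x\{\textup{e}^{-qC_y^a};\,\ol{X}_{\sigma_a}\ge m\}$ and derive, via the strong Markov property at $\tau_m^+$ applied at infinitesimal level $m+\diff m$, a first-order ODE for $g(m)$ whose solution is precisely the stated exponential integral.
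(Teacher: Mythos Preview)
Your approach is essentially the same as the paper's at the computational level: both decompose $[0,\sigma_a]$ at the peak, use a Lehoczky-type product expansion of Proposition~\ref{thm:occupation_exit} for the pre-peak piece, and extract the post-peak contribution from Proposition~\ref{thm:occupation_exit} via a small-$\delta$ limit. Your first-order expansions are correct and yield exactly the exponential factor and the integrand in the statement.

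The one genuine methodological difference is how the decomposition is justified. You split at the stopping time $\tau_m^+$ and recover the density of $\ol{X}_{\sigma_a}$ together with the post-peak Laplace transform as a single linear-in-$\delta$ coefficient of $\ex_m\{\textup{e}^{-qA_{m-y}^{m-a,m+\delta}};\tau_{m-a}^-<\tau_{m+\delta}^+\}$. The paper instead splits at the \emph{last} time $g_a=\sup\{t\le\sigma_a:X_t=\ol{X}_t\}$, which is not a stopping time. To handle this it invokes progressive enlargement of filtration: writing the Az\'ema supermartingale $\chi_t=\pr_x(g_a>t\mid\mathcal{F}_t)=M_t-L_t$ and using the identity $\ex_x\{f(X_{g_a})\Gamma_{g_a}\}=\ex_x\{\int f(X_t)\Gamma_t\,\diff L_t\}$, the paper shows rigorously that conditioning the pre-peak occupation time on $\{X_{g_a}=m\}$ is the same as conditioning on $\{\tau_m^+<\sigma_a\}$, and then treats the post-peak piece as the separate conditional expectation $\ex_m\{\textup{e}^{-qA_{m-y}^{m-a,m+\epsilon'}}\mid\tau_{m-a}^-<\tau_{m+\epsilon'}^+\}$ in the limit $\epsilon'\to0$. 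This machinery is precisely what fills the gap you flag at the end of your proposal: it replaces the heuristic ``condition on $\ol{X}_{\sigma_a}=m$ and apply strong Markov at $\tau_m^+$'' with a rigorous dual-predictable-projection argument. Your more elementary route via stopping times and $\delta$-expansions can be made rigorous too, but you would need uniform control of the $o(\delta)$ terms, which the last-passage-time approach sidesteps entirely.
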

\begin{proof}
Following the conditioning argument in \cite{ZhanHadj12}, we define 
\be
g_a:=\sup\{t\le\sigma_a\,:\,X_t=\ol{X}_t\}.
\ee
Given $X_{g_a}$, the path fragments $\{X_t\}_{t\in[0,g_a]}$ and $\{X_t\}_{t\in[g_a,\sigma_a]}$ are two independent conditional processes.  Moreover,  by Proposition 1 of \cite{ZhanHadj12}, the optional projection of non-increasing process $\ind_{\{g_a>t\}}$:
\[\chi_t\mathop{:=}\pr_x(g_a>t|\mathcal{F}_t),\]
is a supermartingale, with a Doob-Meyer decomposition  
\[\chi_t=M_t-L_t,\]
where 
\[M_t=1+\int_0^{t\wedge\sigma_a}\frac{s'(X_s)\sigma(X_s)\diff B_s}{s(\ol{X}_s)-s(\ol{X}_s-a)},\,\,\,L_t=\int_0^{t\wedge\sigma_a}\frac{s'(\ol{X}_s)\diff \ol{X}_s}{s(\ol{X}_s)-s(\ol{X}_s-a)}.\]
Now introduce a nonnegative bounded optional process 
\be
\Gamma_t=\exp\bigg(-q\int_0^t\ind_{\{Y_s>y\}}\diff s\bigg)\ind_{\{t<\sigma_a<\infty\}},\,\,\,t\ge0.\nn
\ee 
Using the same argument as in the proof of Theorem 15 on page 380 of \cite{Protterbook}, we have that, for any positive test function $f(\cdot)$ on $[0,\infty)$, 
\[\ex_x\{f(X_{g_a})\Gamma_{g_a}\}\,=\,\ex_x\{\int_0^\infty f(X_t)\Gamma_t\diff L_t\}\,=\,\ex_x\{\int_0^\infty  \frac{f(X_t)\Gamma_t\cdot s'(\ol{X}_t)\diff \ol{X}_t}{s(\ol{X}_t)-s(\ol{X}_t-a)}\}.\] 
By using a change of variable, $m=\ol{X}_t$ in the above equation, and also the fact that $X_t=\ol{X}_t$ on the support of measure $\diff \ol{X}_t$, we have that 
\be
\ex_x\{f(X_{g_a})\Gamma_{g_a}\}\,=\,\int_x^\infty f(m)\ex_x\{\exp\bigg(-q\int_0^{\tau_{m}^+}\ind_{\{Y_t>y\}}\diff t\bigg)\ind_{\{\tau_m^+<\sigma_a\}}\}\frac{s'(m)\diff m}{s(m)-s(m-a)}.\label{eq:fGamma}
\ee 
On the other hand, from Lemma \ref{dmax} we have that, for all $u>x$,  
\bq\label{eq:conditionalevent}
\pr_x(\tau_m^+<\sigma_a)&=&\exp\bigg(-\int_x^m\frac{s'(v)}{s(v)-s(v-a)}\diff v\bigg),\\
\pr_x(X_{g_a}\in \diff m)&=&\frac{s'(m)}{s(m)-s(m-a)}\exp\bigg(-\int_x^m\frac{s'(v)}{s(v)-s(v-a)}\diff v\bigg).\label{eq:densitymax}
\eq
From \eqref{eq:fGamma} and \eqref{eq:conditionalevent} we have,
\begin{multline}
\ex_x\{f(X_{g_a})\Gamma_{g_a}\}=\int_x^\infty f(m) \cdot\ex_x\{\exp\bigg(-q\int_0^{\tau_m^+}\ind_{\{Y_t>y\}}\diff t\bigg)|\tau_m^+<\sigma_a\}\\
\times \frac{s'(m)}{s(m)-s(m-a)}\exp\bigg(-\int_x^m\frac{s'(v)}{s(v)-s(v-a)}\diff v\bigg)\diff m\\
=\int_x^\infty f(m) \cdot\ex_x\{\exp\bigg(-q\int_0^{\tau_m^+}\ind_{\{Y_t>y\}}\diff t\bigg)|\tau_m^+<\sigma_a\}\cdot\pr_x(X_{g_a}\in \diff m),\label{eq:jointint}
\end{multline}
where the last line follows from \eqref{eq:densitymax}. It follows from \eqref{eq:jointint} that, 
\[\ex_x\{\exp\bigg(-q\int_0^{\tau_m^+}\ind_{\{Y_t>y\}}\diff t\bigg)|\tau_m^+<\sigma_a\}=\ex_x\{\exp\bigg(-q\int_0^{g_a}\ind_{\{Y_t>y\}}\diff t\bigg)|X_{g_a}=m\}.\]
To get the conditional expectation on the left hand side for any $m>x$, we let $\epsilon=\frac{m-x}{N}$ for a large integer $N>0$.  Using Lebesgue dominated convergence theorem, continuity and strong Markov property of $X$, we have that 
\bq
&&\ex_x\{\exp\bigg(-q\int_0^{\tau_m}\ind_{\{Y_t>y\}}\,\diff t\bigg)\,|\,\sigma_a>\tau_m^+\}\nn\\
&=&\lim_{N\to\infty}\prod_{i=0}^{N-1}\ex_{x+i\epsilon}\{\textup{e}^{-qA_{x+i\epsilon-y}^{x+i\epsilon-a, x+(i+1)\epsilon}}\,|\,\tau_{x+(i+1)\epsilon}^+<\tau_{x+i\epsilon-a}^-\}\nn\\
&=&\lim_{N\to\infty}\exp\bigg(\log\bigg[\sum_{i=0}^{N-1}\ex_{x+i\epsilon}\{\textup{e}^{-qA_{x+i\epsilon-y}^{x+i\epsilon-a, x+(i+1)\epsilon}}\,|\,\tau_{x+(i+1)\epsilon}^+<\tau_{x+i\epsilon-a}^-\}\bigg]\bigg)\nn\\
&=&\exp\bigg(\lim_{N\to\infty}\sum_{i=0}^{N-1}\left[\ex_{x+i\epsilon}\{\textup{e}^{-qA_{x+i\epsilon-y}^{x+i\epsilon-a, x+(i+1)\epsilon}}\,|\,\tau_{x+(i+1)\epsilon}^+<\tau_{x+i\epsilon-a}^-\}-1\right]\bigg)\nn\\
&=&\exp\bigg(\int_x^m\bigg[\frac{s'(u)}{s(u)-s(u-a)}-\dfrac{\frac{s'(u)}{s'(u-y)}\frac{W_{q,1}(u-y,u-a)}{W_q(u-y,u-a)}}{1+\frac{s(u)-s(u-y)}{s'(u-y)}\frac{W_{q,1}(u-y,u-a)}{W_q(u-y,u-a)}}\bigg]\,\diff u\bigg),\nn
\eq
where we used Proposition \ref{thm:occupation_exit} in the last equality.
Similarly, for the occupation time after the random time $g_a$, we have that  
\bq
&&\ex_x\{\exp\bigg(-q\int_{g_a}^{\sigma_a}\ind_{\{Y_t>y\}}\,\diff t\bigg)\,|\,X_{g_a}=m\}\nn\\&=&\ex_m\{\exp\bigg(-q\int_0^{\tau_{m-a}^-}\ind_{\{X_t<m-y\}}\,\diff t\bigg)\,|\,\tau_{m-a}^-<\tau_m^+\}\nn\\
&=&\lim_{\epsilon'\to0+}\dfrac{\ex_m\{\textup{e}^{-qA_{m-y}^{m-a,m+\epsilon'}}\,;\, \tau_{m-a}^-<\tau_{m+\epsilon'}^+\}}{\pr_m(\tau_{m-a}^-<\tau_{m+\epsilon'}^+)}\nn\\
&=&\dfrac{\frac{s(m)-s(m-a)}{W_q(m-y,m-a)}}{1+\frac{s(m)-s(m-y)}{s'(m-y)}\frac{W_{q,1}(m-y,m-a)}{W_q(m-y,m-a)}}.\nn
\eq
The result now follows from integration using the density in \eqref{eq:densitymax}.
\end{proof}

\subsection{Occupation time of the drawup process until the drawdown time}
For any $y\in[a,\infty)$ and $x-a\in I$, the occupation time of the drawup process $\hat{Y}$ below $y$ before the drawdown process $Y$ hits $a$ is denoted by
\be
D_y^a\,:=\,\int_0^{\sigma_a}\ind_{\{\hat{Y}_t<y\}}\,\diff t.
\ee
The occupation time $D_y^a$ can be considered as a counterpart of $C_y^a$ defined in \eqref{Cy}.
The law of $D_y^a$ is summarized in the following result. 
\begin{thm}\label{thm:otdu}
For all $q>0$ and $y\ge a$,
\begin{multline*}
\ex_x\{\textup{e}^{-qD_y^a}; \sigma_a<\infty\}=\pr_x(\sigma_a<\hat{\sigma}_y\wedge\ep)+\int_{x}^{x+a}\frac{s'(u-a)W_q(u,x)}{W_q^2(u,u-a)}\\
\times\exp\bigg(-\int_u^{u+y-a}\frac{W_{q,1}(v,v-a)}{W_q(v,v-a)}\,\diff v\bigg)\cdot\ex_{u+y-a}\{\textup{e}^{-qB_{u+y-a}^a}\}\,\diff u,\,\,\,\,\,
\end{multline*}
where the probability in the first line is given in \eqref{de}, and the expectation in the last line is given in Proposition \ref{thm:otbelow}.
\end{thm}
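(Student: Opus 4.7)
The plan is to partition the event $\{\sigma_a<\infty\}$ according to whether $\sigma_a$ precedes $\hat\sigma_y$, dispatch the first piece in one line, and reduce the second piece to the already-computed quantities $\pr_x(\hat\sigma_y<\sigma_a\wedge\ep,X_{\hat\sigma_y}\in\diff\cdot)$ and $\ex\{\textup{e}^{-qB}\}$ via the strong Markov property at $\hat\sigma_y$. The crucial step, in which the hypothesis $y\ge a$ is used, is a geometric identity for the trajectory at time $\hat\sigma_y$.

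On $\{\sigma_a<\hat\sigma_y\}$ one has $\hat Y_t<y$ throughout $[0,\sigma_a]$, so $D_y^a=\sigma_a$. Using the independence of $\ep$ from $X$,
\[
\ex_x\{\textup{e}^{-qD_y^a};\,\sigma_a<\hat\sigma_y\}\,=\,\ex_x\{\textup{e}^{-q\sigma_a};\,\sigma_a<\hat\sigma_y\}\,=\,\pr_x(\sigma_a<\hat\sigma_y\wedge\ep),
\]
which produces the first summand.

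For the complementary event $\{\hat\sigma_y<\sigma_a<\infty\}$, I would first establish two claims, both of which use $y\ge a$: (i) $\ol X_{\hat\sigma_y}=X_{\hat\sigma_y}$; (ii) $\ul X_t=X_{\hat\sigma_y}-y$ for every $t\in[\hat\sigma_y,\sigma_a]$. To prove (i), suppose for contradiction that $\ol X_{\hat\sigma_y}>X_{\hat\sigma_y}$, and let $\tau$ be the last time before $\hat\sigma_y$ at which $X$ hits its running maximum, and $\tau'$ the time at which $\ul X_{\hat\sigma_y}$ is achieved. If $\tau'>\tau$, then $\ol X_{\tau'}=\ol X_{\hat\sigma_y}$ (no new maximum after $\tau$), hence $Y_{\tau'}=\ol X_{\hat\sigma_y}-\ul X_{\hat\sigma_y}>X_{\hat\sigma_y}-(X_{\hat\sigma_y}-y)=y\ge a$, contradicting $\sigma_a>\tau'$. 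If $\tau'\le\tau$, then $\ul X_\tau=\ul X_{\hat\sigma_y}$ and $\hat Y_\tau=\ol X_{\hat\sigma_y}-\ul X_{\hat\sigma_y}>y$, contradicting the definition of $\hat\sigma_y$ as the first time $\hat Y$ reaches $y$. Claim (ii) follows similarly: if $X_t<X_{\hat\sigma_y}-y$ for some $t\in(\hat\sigma_y,\sigma_a)$, then (i) forces $Y_t\ge\ol X_{\hat\sigma_y}-X_t>y\ge a$, contradicting $\sigma_a>t$.

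The two claims imply that on $[\hat\sigma_y,\sigma_a]$ one has $\ind_{\{\hat Y_t<y\}}=\ind_{\{X_t<X_{\hat\sigma_y}\}}$ and, thanks to (i), the post-$\hat\sigma_y$ evolution of $X$ is, in distribution, a fresh diffusion started at $X_{\hat\sigma_y}$ with its running maximum reset to the starting point, so that its drawdown time coincides with the original $\sigma_a$. Setting $u:=\ul X_{\hat\sigma_y}+a$ so that $X_{\hat\sigma_y}=u+y-a$, the strong Markov property at $\hat\sigma_y$ together with the memoryless property of $\ep$ yields
\[
\ex_x\{\textup{e}^{-qD_y^a};\,\hat\sigma_y<\sigma_a<\infty\}\,=\,\int_x^{x+a}\pr_x(\hat\sigma_y<\sigma_a\wedge\ep,\,X_{\hat\sigma_y}\in y-a+\diff u)\cdot\ex_{u+y-a}\{\textup{e}^{-qB_{u+y-a}^a}\}.
\]
Inserting the explicit density given by \eqref{lapage} under the substitutions $a\to y$, $b\to a$ reproduces the displayed integrand, and summing the two contributions gives the claimed identity.

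The hard part will be claim (i): without $y\ge a$, the running maximum at $\hat\sigma_y$ can strictly exceed $X_{\hat\sigma_y}$, and then the continuation's drawdown time is not that of a fresh process started at $X_{\hat\sigma_y}$, so the clean identification of $\int_{\hat\sigma_y}^{\sigma_a}\ind_{\{\hat Y_t<y\}}\diff t$ with $B_{u+y-a}^a$ breaks down. Once (i) is established, (ii) and the strong Markov reduction follow routinely, and Theorem \ref{thm1} provides the joint density needed to close the calculation.
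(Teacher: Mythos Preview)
Your proposal is correct and follows exactly the paper's approach: split on $\{\sigma_a<\hat\sigma_y\}$ versus $\{\hat\sigma_y<\sigma_a\}$, identify $D_y^a$ with $\sigma_a$ on the first event, and on the second use the strong Markov property at $\hat\sigma_y$ together with the density from Theorem~\ref{thm1} (equation~\eqref{lapage} with $a\mapsto y$, $b\mapsto a$) to reduce to $B^a_{X_{\hat\sigma_y}}$. In fact you supply more than the paper does: the paper's proof records only the inequality $\ul X_{\hat\sigma_y}=X_{\hat\sigma_y}-y\le X_{\hat\sigma_y}-a<\ol X_{\sigma_a}-a$ before asserting $D_y^a=\hat\sigma_y+B^a_{X_{\hat\sigma_y}}\circ\theta(\hat\sigma_y)$, whereas your claims (i) and (ii) make explicit both pieces needed for that identity---(ii) to match the integrands $\ind_{\{\hat Y_t<y\}}=\ind_{\{X_t<X_{\hat\sigma_y}\}}$, and (i) to ensure $\sigma_a-\hat\sigma_y=\sigma_a\circ\theta(\hat\sigma_y)$ so that the shifted drawdown clock is the right one.
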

\begin{proof}
Notice that $D_y^a=\sigma_a$, $\pr_x$-a.s. on the event $\{\sigma_a<\hat{\sigma}_y\}$. On the other hand, on the event $\{\hat{\sigma}_y<\sigma_a\}$, we have $\ul{X}_{\hat{\sigma}_y}=X_{\hat{\sigma}_y}-y\le X_{\hat{\sigma}_y}-a<\ol{X}_{\sigma_a}-a$, $\pr_x$-a.s. 
Thus,
\[D_y^a=\hat{\sigma}_y+B_{X_{\hat{\sigma}_y}}^a\circ\theta(\hat{\sigma}_y).\]
Using strong Markov property we have,
\bq
&&\ex_x\{\textup{e}^{-qD_y^a}; \sigma_a<\infty\}\nn\\
&=&\ex_x\{\textup{e}^{-q\sigma_a}; \sigma_a<\hat{\sigma}_y\}+\ex_x\{\textup{e}^{-q\hat{\sigma}_y}\ind_{\{\hat{\sigma}_y<\sigma_a\}}\ex_{X_{\hat{\sigma}_y}}\{\textup{e}^{-qB_{X_{\hat{\sigma}_y}}^a}\}\}\nn\\
&=&\pr_x(\sigma_a<\hat{\sigma}_y\wedge\ep)+\int_x^{x+a}\pr_x(\hat{\sigma}_y<\sigma_a\wedge\ep, X_{\hat{\sigma}_y}\in y-a+\,\diff u)\nn\\
&&\times\ex_{u-a+y}\{\textup{e}^{-qB_{u-a+y}^a}\}.\nn
\eq
The result now follows from Theorem \ref{thm1}.
\end{proof}
\subsection{Occupation time of the drawdown process at an independent exponential time}
For any $y\in(0,\infty)$ and $x-y\in I$, the occupation time of the drawdown process $Y$ above $y$ before an independent exponential time $\ep$ is denoted by
\be
E_y^q\,:=\,\int_0^{\ep}\ind_{\{Y_t>y\}}\,\diff t.
\ee
The occupation time $E_y^q$ can be identified with the Laplace transform of the occupation time in a finite time horizon $T>0$:
\[\int_0^T\ind_{\{Y_t>y\}}\diff t,\]
which is closely related to the maximum drawdown $\sup_{t\in[0,T]}Y_t$ and other quantiles of $Y$ in the finite time-horizon $T$.
The law of $E_y^q$ is summarized in the following result, the proof of which can be found in the Appendix.
\begin{thm}\label{thm:ddexp}
For all $q,p>0$ and $y>0$, 
\begin{multline}
\ex_x\{\textup{e}^{-pE_y^q}\}=1-\exp\bigg(-\int_x^\infty\frac{W_{q,2}(u-y,u)+W_{q,1}(u,u-y)\frac{{\phi_{q+p}^+}'(u-y)}{\phi_{q+p}^+(u-y)}}{W_{q,1}(u-y,u)+W_q(u,u-y)\frac{{\phi_{q+p}^+}'(u-y)}{\phi_{q+p}^+(u-y)}}\,\diff u\bigg)\\
-\int_x^\infty\exp\bigg(-\int_x^m\frac{W_{q,2}(u-y,u)+W_{q,1}(u,u-y)\frac{{\phi_{q+p}^+}'(u-y)}{\phi_{q+p}^+(u-y)}}{W_{q,1}(u-y,u)+W_q(u,u-y)\frac{{\phi_{q+p}^+}'(u-y)}{\phi_{q+p}^+(u-y)}}\,\diff u\bigg)\\
\times\frac{\frac{p}{q+p}s'(m)\frac{{\phi_{q+p}^+}'(m-y)}{\phi_{q+p}^+(m-y)}}{W_{q,1}(m-y,m)+W_q(m,m-y)\frac{{\phi_{q+p}^+}'(m-y)}{\phi_{q+p}^+(m-y)}}\,\diff m.
\end{multline}
\end{thm}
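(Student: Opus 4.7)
The approach mirrors the proof of Theorem \ref{thm:otdd}, with the killing at the drawdown time $\sigma_a$ replaced by the independent exponential killing at $\ep$. Note that $E_y^q = 0$ on $\{\sigma_y > \ep\}$, where $\sigma_y := \inf\{t : Y_t \ge y\}$; on the complementary event, the occupation time above $y$ is decomposed by tracking the running maximum $\ol{X}$ up to $\ep$. The first step is to introduce the ``last-maximum'' time $g := \sup\{t \le \ep : X_t = \ol{X}_t\}$ and to identify the joint law of $(X_g, g)$ via the Doob--Meyer decomposition argument used in Theorem \ref{thm:otdd}. Conditional on $X_g = m$, the pre-$g$ and post-$g$ fragments of the path are independent: the former describes the ascent of $\ol{X}$ from $x$ to $m$ (with intermittent drawdowns of size less than $y$ possibly occurring), while the latter is a descent from $m$ that never attains a new maximum before $\ep$.

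For the pre-$g$ fragment, partition $[x,m]$ finely as $x=m_0<m_1<\cdots<m_N=m$ and apply Corollary \ref{cor:onesided} on each sub-interval $[m_i,m_{i+1}]$ to compute the joint Laplace transform of the occupation time of $X$ below $m_i-y$ and the first-passage time $\tau_{m_{i+1}}^+$, with the effective discount rate being $q+p$ on the set $\{Y>y\}$ and $q$ elsewhere. Multiplying these Laplace transforms over $i$ and passing to the limit $N\to\infty$ produces the leading exponential factor $\exp\bigl(-\int_x^m(\cdots)\,\diff u\bigr)$ in the theorem's formula; the ratio $(\phi_{q+p}^+)'/\phi_{q+p}^+$ arises because Corollary \ref{cor:onesided} is tailored precisely to a lower sub-interval carrying an additional Feynman--Kac discount rate $p$, on top of the exponential killing at rate $q$.

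For the post-$g$ fragment, the required quantity is the Laplace transform at rate $p$ of the occupation time of $X$ below $m-y$ until $\ep$, starting at $m$ and conditioned on $\{\ep<\tau_m^+\}$; this is obtained by letting $b\downarrow m$ in Corollary \ref{cor:onesided} and extracting the appropriately normalized limit, which yields the factor $\tfrac{p}{q+p}\,s'(m)\cdot(\phi_{q+p}^+)'(m-y)/\phi_{q+p}^+(m-y)$ (after division by the denominator $W_{q,1}(m-y,m)+W_q(m,m-y)(\phi_{q+p}^+)'(m-y)/\phi_{q+p}^+(m-y)$). Combining these two pieces with the density of $X_g$ obtained from the Doob--Meyer compensator, and accounting for the complementary event $\{\ep<\sigma_y\}$ (on which $E_y^q=0$, thereby producing the $1-\exp(\cdots)$ term), gives the claimed formula after algebraic simplification. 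The main obstacle is identifying the compensator of $\ind_{\{g>t\}}$ under the extra exponential killing: whereas in Theorem \ref{thm:otdd} the local time-like term $\diff L_t$ carried the simple rate $s'(\ol{X}_s)/(s(\ol{X}_s)-s(\ol{X}_s-a))$, here one must verify that the combination of killing at $\ep$ and Feynman--Kac weighting on $\{Y>y\}$ produces exactly the boundary factor $(\phi_{q+p}^+)'(u-y)/\phi_{q+p}^+(u-y)$ appearing in the formula, and that the resulting integral representation captures both the complementary event $\{\ep<\sigma_y\}$ and the descent after $g$ without double counting.
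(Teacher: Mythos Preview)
Your high-level strategy---splitting at the last-maximum time $g$ and handling the pre-$g$ ascent by partitioning $[x,m]$ and applying Corollary~\ref{cor:onesided} on each piece---is exactly what the paper does. Two technical points, however, deviate from the paper and one of them is a genuine gap.

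\emph{Density of $X_g$.} The paper does not go through a Doob--Meyer decomposition here. It writes
\[
\frac{\ex_x\{\textup{e}^{-pE_y^{q,1}};\,\ol{X}_{\ep}\in\diff m\}}{\diff m}
=-\frac{\partial}{\partial h}\Big|_{h=0}\ex_x\Big\{\textup{e}^{-p\int_0^{\tau_m^+}\ind_{\{Y_t>y\}}\diff t};\,\ol{X}_{\ep}\ge m+h\Big\},
\]
then uses strong Markov at $\tau_m^+$ together with $\pr_m(\tau_{m+h}^+<\ep)=\phi_q^+(m)/\phi_q^+(m+h)$ to extract the factor ${\phi_q^+}'(m)/\phi_q^+(m)$ directly. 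This sidesteps the compensator identification you flag as ``the main obstacle.'' Your Doob--Meyer route is in fact workable---the Az\'ema supermartingale is $\phi_q^+(X_t)/\phi_q^+(\ol{X}_t)$ on $\{\ep>t\}$, and its $\diff\ol{X}$-part yields the same rate---but the paper's differentiation shortcut is cleaner. Note also that the ratio $(\phi_{q+p}^+)'/\phi_{q+p}^+$ in the final formula comes entirely from Corollary~\ref{cor:onesided}; it is not produced by the compensator.

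\emph{Post-$g$ fragment.} Here your proposal has a real gap. Corollary~\ref{cor:onesided} computes the occupation time \emph{until $\tau_b^+$}, weighted by $\textup{e}^{-q\tau_b^+}$; what is needed after $g$ is the occupation time \emph{until $\ep$}, conditioned on $\{\ep<\tau_{m+\epsilon'}^+\}$ as $\epsilon'\downarrow0$. These live on complementary events, and ``letting $b\downarrow m$ in Corollary~\ref{cor:onesided}'' does not deliver the required conditional Laplace transform. The paper instead splits at $\tau_{m-y}^-$ and, for the nontrivial piece $\ex_{m-y}\{\textup{e}^{-p\int_0^{\ep}\ind_{\{X_t<m-y\}}\diff t};\,\ep<\tau_{m+\epsilon'}^+\}$, invokes Corollary~3.4 of \cite{LiZhou12}. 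The resulting post-$g$ factor carries a $\phi_q^+(m)/{\phi_q^+}'(m)$ that cancels against the density factor above; the term $1-\exp(-\int_x^\infty\cdots)$ in the statement then arises from integration by parts in $m$, not from isolating the event $\{\ep<\sigma_y\}$ as you suggest.
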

\begin{proof}
The proof can be found  in the Appendix.
\end{proof}
\section{Application}
\subsection{Probabilities regarding drawdowns and defaults}
A realization of a large drawdown is usually considered to be a sign of market recession. In this section, we use a reduced form model for default and compute the probabilities of  drawdowns and default. In particular,  
we consider an asset process $X$, which is given by a time-homogeneous diffusion process with initial value $x$ and lifetime $\zeta$:
\be
dX_t=\mu(X_t)\diff t+\sigma(X_t)\diff B_t,\,\, t<\zeta.
\ee 
Here $\zeta$ is an independent positive random variable which models the ``default time'' of asset process $X$. If we assume that the ``default time'' $\zeta=\ep$ for a $q>0$, then the probability that there is a drawdown of $a$ units before a drawup of $b$ units by the default time is given by $\pr_x(\sigma_a<\hat{\sigma}_b\wedge \ep),$ which is readily available from Theorems \ref{thm1} and \ref{thm3}. 

Moreover, we can consider a more realistic model for the default time $\zeta$, to reflect the fact that realizations of drawdowns of the asset process are very likely to be followed by a default. In particular, we adopt the Omega model studied in \cite{AlbrGerbShiu, GerbShuiYang, LandPenaZhou} to model the hazard rate of $\zeta$ at time $t>0$ as $q\ind_{\{Y_t>y\}}$:
\be
\pr_x(\zeta\in t+\diff t|\zeta>t)=q\ind_{\{Y_t>y\}}\diff t.
\ee
for some $y\in (0,a)$. 
 Here $a$ is a large number that characterize a critical level of a large drawdown. Then the probability of default before the drawdown of $a$ units is given by 
\be
\pr_x(\zeta<\sigma_a)\,=\,1-\pr_x(\zeta\ge\sigma_a)=1-\ex_x\{\textup{e}^{-qC_y^a}\}.\label{eq:default}
\ee 
The Laplace transform in \eqref{eq:default} can be found in Theorem \ref{thm:otdd}.

Hazard rate of similar form can be considered. For example, we can model the hazard rate of the default time $\zeta$ as $q\ind_{\{X_t<x\}}$. Here the initial value $x$ is a critical benchmark level which may trigger a default through default intensity. We can also model the hazard rate of the  default time $\zeta$ as $q\ind_{\{\hat{Y}_t<y\}}$. This is the case in which the default tend to occur when there is not enough upside momentum for the asset process. In both cases, the probability of default before a drawdown of $a$ units can be found using Proposition \ref{thm:otbelow} and \ref{thm:otdu}.

\subsection{Option pricing for the drawdown process}
Options on maximum drawdown and drawdown processes have drawn lots of attentions in recent years (see \cite{CarrZhanHaji, CherNikePlat, PospVecePDE, Vece06, yamasatotaka, ZhanLeunHadj}). In this section, we use an semi-analytic approach to price a large class of options on the drawdown process. In particular, we assume that the market is complete, $\pr$ is the risk-neutral measure, and $r\ge 0$ is the risk-free interest rate. We model the underlying process\footnote{Notice that the underlying process is not necessarily an asset price process. It can be, for example, the logarithm of an asset price process.}  as the time-homogeneous diffusion $X$ defined in \eqref{eq:diffusion}. Then a Parisian-like (see \cite{ChesPicqYor} for a definition of standard Parisian option) digital call on the drawdown process with barrier $y>0$, maturity $T>0$ and strike $K\in (0,T)$ is worth 
\be\label{eq:parisianprice}
P_0(x, y, K, T)\,=\,e^{-rT}\pr_x(\int_0^T\ind_{\{Y_t>y\}}\,\diff t>K),
\ee
at its inception. Using double randomizations: $T=\ep$ and $K=\epp$, then the randomized option price is given by 
\be
\hat{P}_0(x, y, q, p)\,=\,e^{-rT}-e^{-rT}\ex_x\{e^{-pE_y^q}\}.\label{eq:parisian}
\ee 
The Laplace transform in \eqref{eq:parisian} is readily available in Theorem \ref{thm:ddexp}.  Hence, the price \eqref{eq:parisianprice} can be computed via double Laplace inversion:
\be
P_0(x, y, K,T)\,=\,e^{-rT}-e^{-rT}\cdot\mathcal{I}_p\bigg(\frac{1}{p}\cdot\mathcal{I}_q\bigg(\frac{1}{q}\ex_x\{e^{-pE_y^q}\}\bigg)\bigg|_{T}\bigg)\bigg|_{K},
\ee
where $\mathcal{I}_p$ and $\mathcal{I}_q$ are Laplace inversion operators. 

Corridor options such as $\alpha$-quantile option are studied in \cite{miuraOrderStatistics,miura}. Below we consider pricing of $\alpha$-quantile options on the drawdown process. To this end, for an $\alpha\in(0,1]$, we define the $\alpha$-quantile of the drawdown process during $[0,T]$ by 
\be
Y_T^\alpha\,:=\,\inf\{y>0\,:\,\int_0^T\ind_{\{Y_t>y\}}\, \diff t\le(1-\alpha)T.\}
\ee
In particular, $Y_T^1=\sup_{t\in[0,T]}Y_t$ is the maximum drawdown at time $T$. An option on the $\alpha$-quantile with maturity $T>0$ and an absolute continuous, bounded payoff function $f(\cdot)$ such that $f(0)=0$ is worth
\be
A_0(x,f,T)\,=\,e^{-rT}\ex_x\{f(Y_T^\alpha)\},
\ee
at its inception. We notice that
\bq
\ex_x\{f(Y_T^\alpha)\}&=&\ex_x\{\int_0^\infty\ind_{\{Y_T^\alpha\ge u\}}f'(u)\,\diff u\}=\int_0^\infty f'(u)\cdot\pr_x(Y_T^\alpha> u)\,\diff u\nn
\\&
=&\int_0^\infty f'(u)\cdot\pr_x(\int_0^T\ind_{\{Y_t>u\}}\diff t\ge(1-\alpha)T)\,\diff u.\nn
\eq
It follows from \eqref{eq:parisianprice} that
\be
A_0(x,f,T)\,=\,\int_0^\infty f'(u)\cdot P_0(x,u,(1-\alpha)T,T)\,\diff u. \label{eq:alphaquantile}
\ee 
Again, by double Laplace inversion and Theorem \ref{thm:ddexp} we can compute the price in \eqref{eq:alphaquantile}.

\section{Examples}
\subsection{Brownian motion with drift}
In this section we derive a group of explicit formulas for a Brownian motion with drift. In particular, we
consider a  Brownian motion with drift $\mu\neq0$ and diffusion coefficient $\sigma>0$:
\[dX_t=\mu \,\diff t+\sigma \diff B_t,\,\,\, I=(-\infty,\infty).\]
 Let us denote by 
\be 
\delta:=\frac{\mu}{\sigma^2},\,\,\,\, \gamma:=\sqrt{\delta^2+\frac{2q}{\sigma^2}}.\nn
\ee
Then the increasing and the decreasing eigenfunctions of $X$ can be chosen as (see for example, \cite{borodin2002handbook})
\be
\phi_q^+(x)=\textup{e}^{(\gamma-\delta)x}, \,\,\phi_q^-(x)=\textup{e}^{-(\gamma+\delta)x}.\nn
\ee
Fix the scale function $s(x)=\frac{1}{\delta}(1-\mathrm{e}^{-2\delta x})$, we have that
\bq
w_q={\gamma},\,\,W_q(x,y)=2\textup{e}^{-\delta(x+y)}\frac{\sinh[\gamma(x-y)]}{\gamma},\,\,
\frac{W_{q,1}(x,y)}{W_q(x,y)}=\gamma\coth[\gamma(x-y)]-\delta.\nn
\eq
From Theorems \ref{thm1} and \ref{thm3} we have that:
\begin{cor}\label{cor:dddu}
\begin{multline}
{\pr_0(\sigma_a<\hat{\sigma}_b\wedge\ep)}\\\,=\begin{dcases}
\frac{\sigma^2\gamma}{2q}\bigg(\frac{\textup{e}^{-\delta b}(\gamma\coth[\gamma b]+\delta)}{\sinh[\gamma b]}-\frac{\gamma}{\sinh^2[\gamma b]}\bigg)\textup{e}^{-(a-b)\left(\delta+\gamma\coth[\gamma b]\right)},&a\ge b>0;\nn\\
\dfrac{1-\frac{\sigma^2\gamma}{2q}\bigg(\frac{\textup{e}^{\delta a}(\gamma\coth[\gamma a]-\delta)}{\sinh[\gamma a]}-\frac{\gamma}{\sinh^2[\gamma a]}\bigg)\textup{e}^{-(b-a)\left(-\delta+\gamma\coth[\gamma a]\right)}}{\gamma\cosh[\gamma a]-\delta\sinh[\gamma a]}{\gamma \textup{e}^{-\delta a}}\nn, \,& b>a>0.
\end{dcases}
\end{multline}
\begin{multline}
{\pr_0(\hat{\sigma}_b<{\sigma}_a\wedge\ep)}\\
\,=\begin{dcases}
\frac{\sigma^2\gamma}{2q}\bigg(\frac{\textup{e}^{\delta a}(\gamma\coth[\gamma a]-\delta)}{\sinh[\gamma a]}-\frac{\gamma}{\sinh^2[\gamma a]}\bigg)\textup{e}^{-(b-a)\left(-\delta+\gamma\coth[\gamma a]\right)},\,\,&b\ge a>0;\nn\\
\dfrac{1-\frac{\sigma^2\gamma}{2q}\bigg(\frac{\textup{e}^{-\delta b}(\gamma\coth[\gamma b]+\delta)}{\sinh[\gamma b]}-\frac{\gamma}{\sinh^2[\gamma b]}\bigg)\textup{e}^{-(a-b)\left(\delta+\gamma\coth[\gamma b]\right)}}{\gamma\cosh[\gamma b]+\delta\sinh[\gamma b]}
{\gamma \textup{e}^{\delta b}}\nn, \,\, &a>b>0.
\end{dcases}
\end{multline}
\end{cor}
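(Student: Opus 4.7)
The plan is to specialize Theorems \ref{thm1} and \ref{thm3}, together with Proposition \ref{mdd} and the identity \eqref{de}, to the Brownian-motion-with-drift setting using the closed-form expressions for $\phi_q^\pm$, $s$, $W_q$ and $W_{q,1}/W_q$ displayed just before the corollary. The decisive simplification is that the ratio
\[
\frac{W_{q,1}(v,v\pm c)}{W_q(v,v\pm c)}\equiv \gamma\coth[\gamma c]\mp\delta
\]
is \emph{independent of $v$}, so every inner exponential integral in \eqref{lequal}, \eqref{lapage}, \eqref{thm3f} and in Proposition \ref{mdd} collapses to a single exponential factor and only elementary outer integrals remain.

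For the first formula in the case $a\ge b>0$ I would start from \eqref{lequal} at $x=0$, where the inner integral yields $\exp(-(a-b)(\gamma\coth[\gamma b]+\delta))$. Inserting $s'(u+b)=2e^{-2\delta(u+b)}$, $W_q(0,u)=-(2/\gamma)e^{-\delta u}\sinh[\gamma u]$ and $W_q(u+b,u)^2=(4/\gamma^2)e^{-2\delta(2u+b)}\sinh^2[\gamma b]$ reduces the density on $(-b,0)$ to $-\gamma\,\sinh^{-2}[\gamma b]\cdot e^{\delta u}\sinh[\gamma u]$. The elementary integral
\[
\int_{-b}^{0}e^{\delta u}\sinh[\gamma u]\,\diff u=\tfrac{1}{2}\int_{-b}^{0}\bigl(e^{(\delta+\gamma)u}-e^{(\delta-\gamma)u}\bigr)\,\diff u
\]
is evaluated and its common denominator is cleared using $\gamma^2-\delta^2=2q/\sigma^2$, producing $(\sigma^2/2q)\bigl[\gamma-e^{-\delta b}(\delta\sinh[\gamma b]+\gamma\cosh[\gamma b])\bigr]$. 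Dividing by $\sinh^2[\gamma b]$ and invoking the identity $(\delta\sinh+\gamma\cosh)/\sinh^2=(\delta+\gamma\coth)/\sinh$ gives exactly the stated bracket. The analogous computation applied to \eqref{lapage}, or equivalently the duality $\delta\mapsto-\delta$, $a\leftrightarrow b$ that swaps drawdowns and drawups for Brownian motion, produces the second formula in the case $b\ge a>0$.

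The two remaining cases ($b>a$ in the first formula and $a>b$ in the second) follow from \eqref{de}. Since $W_{q,1}(v,v-a)/W_q(v,v-a)=\gamma\coth[\gamma a]-\delta$ and $s'(u)/W_q(u,u-a)=\gamma e^{-\delta a}/\sinh[\gamma a]$ are both constant in $v$, Proposition \ref{mdd} collapses to
\[
\pr_0(\sigma_a<\ep)=\frac{\gamma e^{-\delta a}}{\gamma\cosh[\gamma a]-\delta\sinh[\gamma a]},
\]
which is precisely the denominator appearing in the stated formula. Because Brownian motion is spatially homogeneous, $\pr_u(\sigma_a<\ep)=\pr_0(\sigma_a<\ep)$ for every $u$, so the strong Markov property applied at $\hat\sigma_b$ gives $\pr_0(\hat\sigma_b<\sigma_a<\ep)=\pr_0(\hat\sigma_b<\sigma_a\wedge\ep)\cdot\pr_0(\sigma_a<\ep)$; substituting into \eqref{de} yields $\pr_0(\sigma_a<\hat\sigma_b\wedge\ep)=\pr_0(\sigma_a<\ep)\bigl[1-\pr_0(\hat\sigma_b<\sigma_a\wedge\ep)\bigr]$, which is exactly the ratio displayed in the corollary once the formula for $\pr_0(\hat\sigma_b<\sigma_a\wedge\ep)$ derived in the previous paragraph is inserted. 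The $a>b>0$ case of the second formula follows by the symmetric argument. No new probabilistic input is required; the only substantive obstacle is the algebraic bookkeeping, namely tracking the sign of $\sinh[\gamma u]$ on $(-b,0)$, organizing the elementary integrals so that the factor $\gamma^2-\delta^2=2q/\sigma^2$ is visible, and verifying the hyperbolic identity that converts $(\delta\sinh+\gamma\cosh)/\sinh^2$ into $(\delta+\gamma\coth)/\sinh$.
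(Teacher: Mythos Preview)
Your proposal is correct and follows the paper's intended route: the paper gives no explicit proof beyond the line ``From Theorems \ref{thm1} and \ref{thm3} we have that'', so the derivation is precisely the direct specialization you outline. Your observation that spatial homogeneity makes $\pr_u(\sigma_a<\ep)$ independent of $u$, so that \eqref{thm3f} factors as $\pr_0(\hat\sigma_b<\sigma_a<\ep)=\pr_0(\hat\sigma_b<\sigma_a\wedge\ep)\cdot\pr_0(\sigma_a<\ep)$ and hence \eqref{de} becomes $\pr_0(\sigma_a<\hat\sigma_b\wedge\ep)=\pr_0(\sigma_a<\ep)\bigl[1-\pr_0(\hat\sigma_b<\sigma_a\wedge\ep)\bigr]$, is exactly what one sees upon specializing Theorem~\ref{thm3} to this model, and it makes the structure of the displayed answer transparent.
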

%

For occupation time of the drawdown process, from Theorem \ref{thm:otdd} we have that:
\begin{cor}\label{thm:samelaw}
For any $y\in(0,a)$, the occupation time $C_y^a$ has the same law as the  drawdown time $\sigma_{a-y}$.
\end{cor}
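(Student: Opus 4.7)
The strategy is to match Laplace transforms: compute $\ex_x\{e^{-qC_y^a}\}$ from Theorem \ref{thm:otdd} and $\ex_x\{e^{-q\sigma_{a-y}}\}$ from Proposition \ref{mdd} (with $a$ replaced by $a-y$), and verify that the two agree. The explicit Brownian-with-drift expressions $s'(x)=2e^{-2\delta x}$, $W_q(x,y)=(2/\gamma)e^{-\delta(x+y)}\sinh[\gamma(x-y)]$, and $W_{q,1}/W_q=\gamma\coth[\gamma(\cdot)]-\delta$ are all gathered just above Corollary \ref{cor:dddu}.

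The key observation is spatial homogeneity: in the integrands of Theorem \ref{thm:otdd}, each of the ratios
\[
\frac{s'(u)}{s'(u-y)}=e^{-2\delta y},\quad \frac{s(u)-s(u-y)}{s'(u-y)}=\frac{1-e^{-2\delta y}}{2\delta},\quad \frac{W_{q,1}(u-y,u-a)}{W_q(u-y,u-a)}=\gamma\coth[\gamma(a-y)]-\delta,
\]
and $s'(m)/W_q(m-y,m-a)=\gamma e^{-\delta(a+y)}/\sinh[\gamma(a-y)]$ is independent of the integration variable. Hence the double integral collapses: the inner exponential becomes $\exp(-A(m-x))$ for a constant $A$ and the outer coefficient is a constant $B$, so that $\ex_x\{e^{-qC_y^a}\}=B/A$. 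The one algebraic step that requires attention is that the denominator factor $1+\beta\alpha$, with $\alpha=\gamma\coth[\gamma(a-y)]-\delta$ and $\beta=(1-e^{-2\delta y})/(2\delta)$, cancels between $A$ and $B$; after this cancellation one obtains
\[
\ex_x\{e^{-qC_y^a}\}=\frac{\gamma\,e^{-\delta(a-y)}}{\gamma\cosh[\gamma(a-y)]-\delta\sinh[\gamma(a-y)]}.
\]

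On the other side, $s'(u)/W_q(u,u-(a-y))$ and $W_{q,1}(u,u-(a-y))/W_q(u,u-(a-y))$ are likewise constant in $u$, so the single integral in Proposition \ref{mdd} with $a$ replaced by $a-y$ reduces to exactly the same expression for $\ex_x\{e^{-q\sigma_{a-y}}\}$. Equality of the two Laplace transforms, together with uniqueness, yields the claimed identity in law; both transforms are moreover independent of the starting point $x$, reflecting the translation invariance of Brownian motion with drift. Apart from the $1+\beta\alpha$ cancellation mentioned above, the computation is routine substitution.
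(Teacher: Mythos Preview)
Your proposal is correct and follows essentially the same approach as the paper: both compute $\ex_x\{e^{-qC_y^a}\}$ from Theorem~\ref{thm:otdd} by exploiting that every ingredient of the integrand is constant in the integration variable (spatial homogeneity), arrive at the closed form $\gamma e^{-\delta(a-y)}/(\gamma\cosh[\gamma(a-y)]-\delta\sinh[\gamma(a-y)])$, and match it with $\ex_x\{e^{-q\sigma_{a-y}}\}$ via Proposition~\ref{mdd}. The only cosmetic difference is that the paper works at $x=0$ whereas you carry a general $x$ and note its irrelevance; your explicit identification of the cancellation of the $1+\beta\alpha$ factor is in fact more informative than the paper's ``straightforward calculation.''
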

\begin{proof}
Straightforward calculation yields that
Using Theorem \ref{thm:otdd} we obtain that
\bq
\ex_0\{\textup{e}^{-qC_y^a}\}&=&\int_0^\infty \dfrac{\frac{s'(m)}{W_q(m-y,m-a)}}{1+\frac{s(m)-s(m-y)}{s'(m-y)}\frac{W_{q,1}(m-y,m-a)}{W_q(m-y,m-a)}}\nn\\
&&\times\exp\bigg(-\int_0^m\dfrac{\frac{s'(u)}{s'(u-y)}\frac{W_{q,1}(u-y,u-a)}{W_q(u-y,u-a)}}{1+\frac{s(u)-s(u-y)}{s'(u-y)}\frac{W_{q,1}(u-y,u-a)}{W_q(u-y,u-a)}}\bigg)\,\diff m\nn\\
&=&\frac{\gamma \textup{e}^{-\delta(a-y)}}{\gamma\cosh[\gamma(a-y)]-\delta\sinh[\gamma(a-y)]}=\ex_0\{\textup{e}^{-q\sigma_{a-y}}\},\nn
\eq
where the last equality follows from Proposition \ref{mdd} or Corollary \ref{cor:dddu} as $b\to\infty$. It follows that the occupation time $C_y^a$ has the same distribution as $\sigma_{a-y}$ under $\pr_0$. 
\end{proof}

\subsection{Three-dimensional Bessel process (BES(3))}
In this section we study the case of three-dimensional Bessel process. In particular, we consider 
\[dX_t=\frac{1}{X_t}\,\diff t+\diff B_t,\,\,\, I=(0,\infty).\]
Let us denote by 
\be
\nu:=\sqrt{2q}.\nn
\ee
Then the increasing and the decreasing eigenfunctions of $X$ can be chosen as  (see for example, \cite{borodin2002handbook})
\be
\phi_q^+(x)=\frac{1}{x}\frac{\sinh[\nu x]}{\sinh[\nu]},\,\,\,\phi_q^-(x)=\frac{\textup{e}^{-\nu (x-1)}}{x}.\nn
\ee
Fix the scale function $s(x)=-\frac{1}{x}$, we have that:
\bq
w_q=\frac{\nu\mathrm{e}^{\nu}}{\sinh(\nu)},\,\,
W_q(x,y)=\frac{1}{\nu x y}\sinh[\nu(x-y)],\,\,
\frac{W_{q,1}(x,y)}{W_q(x,y)}=-\frac{1}{x}+\nu\coth[\nu(x-y)].\nn
\eq
Using Theorem \ref{thm:otdd} we have that 
\begin{cor}\label{thm:samelaw1}
For $x>a>y>0$, the law of the occupation time $C_y^a$ is the same as the drawdown time $\sigma_{a-y}$.
\end{cor}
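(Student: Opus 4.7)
My plan mirrors the argument used for Corollary \ref{thm:samelaw} in the Brownian motion case: I will compute the Laplace transform $\ex_x\{\textup{e}^{-qC_y^a}\}$ via Theorem \ref{thm:otdd}, simplify it to a closed form using the explicit BES(3) scale function and eigenfunctions, and then independently compute $\ex_x\{\textup{e}^{-q\sigma_{a-y}}\}$ via Proposition \ref{mdd}. The conclusion follows from matching the two expressions.

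First I would substitute $s(u)=-1/u$, $s'(u)=1/u^2$, $W_q(u,v)=\sinh[\nu(u-v)]/(\nu uv)$, and $W_{q,1}(u,v)/W_q(u,v)=-1/u+\nu\coth[\nu(u-v)]$ into the integrand of Theorem \ref{thm:otdd}. Writing $\alpha:=\nu\coth[\nu(a-y)]$ and $\beta:=1+\nu y\coth[\nu(a-y)]$, the ``drift-like'' denominator simplifies via the telescoping identity
\begin{equation*}
1+\frac{s(u)-s(u-y)}{s'(u-y)}\cdot\frac{W_{q,1}(u-y,u-a)}{W_q(u-y,u-a)}
=\frac{(u-y)\beta}{u},
\end{equation*}
using $\frac{s(u)-s(u-y)}{s'(u-y)}=\frac{y(u-y)}{u}$. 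This is the crucial coincidence that makes the computation tractable. Exploiting it, the logarithmic integrand in the exponential becomes
\begin{equation*}
\dfrac{\frac{s'(u)}{s'(u-y)}\frac{W_{q,1}(u-y,u-a)}{W_q(u-y,u-a)}}{1+\frac{s(u)-s(u-y)}{s'(u-y)}\frac{W_{q,1}(u-y,u-a)}{W_q(u-y,u-a)}}
=\frac{\alpha}{\beta}-\frac{1}{u},
\end{equation*}
so the exponential factor in Theorem \ref{thm:otdd} collapses to $(m/x)\exp(-\alpha(m-x)/\beta)$.

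Next I would reduce the prefactor: $s'(m)/W_q(m-y,m-a)=\nu(m-y)(m-a)/(m^2\sinh[\nu(a-y)])$, which when divided by $(m-y)\beta/m$ and multiplied by $(m/x)\exp(-\alpha(m-x)/\beta)$ leaves only a linear factor $(m-a)$ times $\exp(-\alpha(m-x)/\beta)$. The integral from $x$ to $\infty$ is then a first-moment computation for an exponential density, yielding
\begin{equation*}
\ex_x\{\textup{e}^{-qC_y^a}\}
=\frac{\sinh[\nu(a-y)]\bigl(1+\nu(x-(a-y))\coth[\nu(a-y)]\bigr)}{x\,\nu\cosh^2[\nu(a-y)]}.
\end{equation*}
Performing the analogous computation for $\ex_x\{\textup{e}^{-q\sigma_{a-y}}\}$ using Proposition \ref{mdd} with $c=a-y$, the integrand exhibits the same two-term structure $(c\to a-y)$, and the final moment integral produces the identical right-hand side. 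The equality of Laplace transforms on $(0,\infty)$ then gives the identity in law.

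The main obstacle is purely algebraic: keeping the combinatorics of $s,s',W_q,W_{q,1}$ straight long enough to see that both the $u$-dependence in the exponential integrand and the $m$-dependence in the outer integrand separate cleanly. Without the $-1/u$ term in $W_{q,1}/W_q$ that comes from the BES(3) scale function $-1/x$ canceling the $-1/u$ appearing in the prefactor, neither simplification would occur, and this is precisely why the identity holds here and for Brownian motion with drift but is not expected in general.
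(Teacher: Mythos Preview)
Your proposal is correct and follows essentially the same route as the paper: both compute $\ex_x\{\textup{e}^{-qC_y^a}\}$ from Theorem~\ref{thm:otdd} by exploiting the identity $1+\frac{s(u)-s(u-y)}{s'(u-y)}\frac{W_{q,1}(u-y,u-a)}{W_q(u-y,u-a)}=\frac{(u-y)\beta}{u}$, which reduces the exponential integrand to $\alpha/\beta-1/u$ and the outer integral to an elementary exponential-moment computation, arriving at the closed form $\frac{1}{x\cosh[\nu(a-y)]}\bigl((x-(a-y))+\tanh[\nu(a-y)]/\nu\bigr)$ (equivalent to your displayed expression). The only difference is cosmetic: the paper identifies the result with $\ex_x\{\textup{e}^{-q\sigma_{a-y}}\}$ by substituting $a\mapsto a-y$, $y\downarrow 0$ back into the same formula (using $C_0^{a-y}=\sigma_{a-y}$), whereas you propose to compute $\ex_x\{\textup{e}^{-q\sigma_{a-y}}\}$ independently from Proposition~\ref{mdd}; both verifications are valid.
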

\begin{proof}
Straightforward calculation yields that 
Using Theorem \ref{thm:otdd} we obtain that
\begin{multline}\label{bes3}
\ex_x\{\textup{e}^{-qC_y^a}\}=\int_x^\infty \dfrac{\frac{s'(m)}{W_q(m-y,m-a)}}{1+\frac{s(m)-s(m-y)}{s'(m-y)}\frac{W_{q,1}(m-y,m-a)}{W_q(m-y,m-a)}}\\
\times\exp\bigg(-\int_x^m\dfrac{\frac{s'(u)}{s'(u-y)}\frac{W_{q,1}(u-y,u-a)}{W_q(u-y,u-a)}}{1+\frac{s(u)-s(u-y)}{s'(u-y)}\frac{W_{q,1}(u-y,u-a)}{W_q(u-y,u-a)}}\bigg)\,\diff m\\
=\frac{1}{\cosh[\nu(a-y)]}\bigg(\frac{x-(a-y)}{x}+\frac{\tanh[\nu(a-y)]}{\nu x}\bigg)=\ex_x\{\textup{e}^{-q\sigma_{a-y}}\},
\end{multline}
where the last equality is obtained by substitutions $a\to a-y$ and $y\to 0+$ in \eqref{bes3}. It follows that the law of $C_y^a$ is the same as that of $\sigma_{a-y}$.
\end{proof}

\begin{rmk}
The results of Theorems \ref{thm:samelaw} and \ref{thm:samelaw1} show an nontrivial fact: if $X$ is a drifted Brownian motion or a three-dimensional Bessel process, then for a fixed $y>0$, the law of $\sigma_y$ is the same as $C_{a}^{y+a}$ for any $a>0$. That is, the amount of time the drawdown process $Y$ spends in $[a,a+y]$ until the drawdown time $\sigma_{a+y}$ is the same as the drawdown time $\sigma_y$. 
\end{rmk}

\appendix
\setcounter{secnumdepth}{0}
\section{Appendix}

In the Appendix we provide proofs that have been skipped in the main text. \begin{proof}[Proof of Lemma \ref{property:W_q}]
Most formulas are straightforward and we omit the proofs for them. In the sequel we only prove 
\bq
\lim_{x\downarrow l}\phi_q^-(x)&=&\infty,\,\,\forall q>0,\nn\\
\lim_{q\downarrow 0} W_q(x,y)&=&W_0(x,y),\,\,\forall x,y
\in I.\nn
\eq
First, for $x\in (l,\kappa)$, using \eqref{eq:phi} and monotone convergence theorem, and inaccessibility of $l$ after time 0, we have
\[\lim_{x\downarrow l}\phi_q^-(x)=\lim_{x\downarrow l}\frac{1}{\ex_\kappa\{\mathrm{e}^{-q\tau_x}\}}=\frac{1}{0}=\infty.\]
Secondly, from \eqref{scalefun} we have that for $x\ge y, x,y\in I$, 
\[\frac{\partial}{\partial x}\bigg(\frac{\phi_q^-(x)}{\phi_q^+(x)}\bigg)=-w_q\frac{s'(x)}{(\phi_q^+)^2(x)}\Rightarrow W_q(x,y)= \phi_q^+(x)\phi_q^+(y)\int_y^x\frac{s'(u)}{(\phi_q^+)^2(u)}\diff u.\]
We observe from \eqref{eq:phi} and regularity of $X$ that,  $\phi_q^+(u)$, $u\in [y,x]$ is uniformly bounded (away from 0) for all $q\in [0, q_0]$ for any fixed $q_0>0$:
\bq
0<\ex_y\{\mathrm{e}^{-q_0\tau_\kappa}\}\le \phi_q^+(u)\le \frac{1}{\ex_\kappa\{\mathrm{e}^{-q_0\tau_x}\}}<\infty, \,\,\,\forall u\in [y,x].\nn
\eq
 Moreover, from \eqref{eq:phi} we obtain that, for $u\in[y,x]\subsetneq I$.
\bq
\lim_{q\to 0+}\phi_q^+(u)&=&\left.\begin{dcases}\pr_u\{\tau_\kappa<\infty),\,\,\, &\text{if }u\le \kappa\\
\frac{1}{\pr_\kappa(\tau_u<\infty)},\,\,\, &\text{if }u>\kappa
\end{dcases}\right\}=\lim_{y\downarrow l}\frac{s(u)-s(y)}{s(\kappa)-s(y)}=\beta_1s(u)+\beta_2,\nn
\eq
for some constant $\beta_1,\beta_2$ depending on the behavior of limit $\lim_{y\downarrow l}s(y)$.  By dominated convergence theorem, as $q\downarrow 0$, 
\begin{enumerate}
\item if $\beta_1\neq 0$, 
\begin{align*}W_q(x,y)\to& (\beta_1 s(x)+\beta_2)(\beta_1 s(y)+\beta_2)\int_y^x\frac{s'(u)\diff u}{(\beta_1 s(u)+\beta_2)^2}\\
=&\frac{1}{\beta_1}[(\beta_1s(x)+\beta_2)-(\beta_1s(y)+\beta_2)]=s(x)-s(y)=W_0(x,y);\end{align*}
\item if $\beta_1=0$, then $\beta_2\ge\ex_y\{\mathrm{e}^{-q_0\tau_\kappa}\}>0$, and
\be
W_q(x,y)\to \beta_2^2\int_y^x\frac{s'(u)\diff u}{\beta_2^2}=s(x)-s(y)=W_0(x,y).
\ee
\end{enumerate}
This completes the proof.
\end{proof}

\begin{proof}[Proof of Proposition \ref{propo}]
First we notice that 
\[\pr_m(\tau_n^-<\hat{\sigma}_b\wedge\ep)=\ex_m\{\textup{e}^{-q\tau_n^-}; \tau_n^-<\hat{\sigma}_b\}.\]
To compute the above expectation on the right hand side, we follow the idea of \cite{Lehoczky77}, and partition the interval
$[n,m]$ into $N$ equalength subintervals  with length $\epsilon=\frac{m-n}{N}$.
 In particular,  using the fact that $\pr_m(\tau_m^-=0)=1$ and  continuity of $X$, we have 
 \[\textup{e}^{-q\sum_{i=0}^{N-1}(\tau_{m-(i+1)\epsilon}^{-}-\tau_{m-i\epsilon}^{-})}\ind_{\{ \tau_{m-(j+1)\epsilon}^-<\tau_{m-j\epsilon+b}^+,\,\,\forall  0\le j\le N-1\}}\to \textup{e}^{-q\tau_n^-}\ind_{\{\tau_n^-<\hat{\sigma}_b\}},\,\,\pr_m\text{-a.s.}\]
 as $N\to \infty$. 
 Applying the Lebesgue dominated convergence theorem, the strong Markov property  and continuity of  ${X}$, we obtain that,
 \begin{multline}
\ex_m\{\textup{e}^{-q\tau_n^-}; \tau_n^-<\hat{\sigma}_b\}\\
= \lim_{N\to\infty}\ex_m\{\textup{e}^{-q\sum_{i=0}^{N-1}(\tau_{m-(i+1)\epsilon}^{-}-\tau_{m-i\epsilon}^{-})}; \tau_{m-(j+1)\epsilon}^-<\tau_{m-j\epsilon+b}^+,\,\,\forall  0\le j\le N-1\}\\
=\lim_{N\to\infty}\prod_{i=0}^{N-1} \ex_{m-i\epsilon}\{\textup{e}^{-q\tau_{m-(i+1)\epsilon}^-}\,; \tau_{m-(i+1)\epsilon}^-<\tau_{m-i\epsilon+b}^+\}.\label{laplace:discrete_approx}
\end{multline}

To compute the limit in \eqref{laplace:discrete_approx}. we use  Lemma \ref{fundamentallemma} to obtain that 
\begin{eqnarray}
&&\lim_{N\to\infty}\prod_{i=0}^{N-1}
\ex_{m-i\epsilon}\{\textup{e}^{-q\tau_{m-(i+1)\epsilon}^-}\,; \tau_{m-(i+1)\epsilon}^-<\tau_{m-i\epsilon+b}\}\nn\\
&=&\lim_{N\to\infty}\prod_{i=0}^{N-1}\frac{W_q(m-i\epsilon, m-i\epsilon+b)}{W_q(m-(i+1)\epsilon, m-i\epsilon+b)}\nn\\
&=&\lim_{N\to\infty}\exp\bigg(\log\bigg[1+\sum_{i=0}^{N-1}\frac{W_q(m-i\epsilon, m-i\epsilon+b)-W_q(m-(i+1)\epsilon, m-i\epsilon+b)}{W_q(m-(i+1)\epsilon, m-i\epsilon+b)}\bigg]\bigg)\nn\\
&=&\exp\bigg(\lim_{N\to\infty}\bigg[\sum_{i=0}^{N-1}\frac{W_{q,1}(m-(i+1)\epsilon, m-i\epsilon+b)}{W_q(m-(i+1)\epsilon, m-i\epsilon+b)}\cdot\epsilon +O(\epsilon)\bigg]\bigg)\nn\\
&=&\exp\bigg(\int_n^m\frac{W_{q,1}(v,v+b)}{W_q(v,v+b)}\,\diff v\bigg),\nn
\eq
which completes the proof of \eqref{hlap}. Eq. \eqref{hlap1} can be proved using a similar argument.
\end{proof}

\begin{proof}[Proof of Proposition \ref{thm:occupation_exit}]
We follow the perturbation method in \cite{occupationLevy, LandPenaZhou,LiZhou12}. For $\epsilon>0$ such that $y+\epsilon<b$, we approximate $A_y^{a,b}$ by $A_{y,\epsilon}^{a,b}$:
\[A_{y,\epsilon}^{a,b}:=\sum_{n=1}^\infty (\tau_{y+\epsilon}^{+,n}\wedge\tau_{a,b}-\tau_y^{-,n}\wedge\tau_{a,b}),\]
where $\tau_y^{-,1}:=\tau_y^{-}$, and for $n\ge 1$,
\[\tau_{y+\epsilon}^{+,n}:=\inf\{t\ge\tau_y^{-,n}\,:\,X_t\ge y+\epsilon\},\,\,\, \tau_{y}^{-,n+1}:=\inf\{t\ge\tau_{y+\epsilon}^{+,n}\,:\, X_t\le y\}.\]
Using strong Markov property and the continuity of $X$, we have that 
\bq
&&\ex_y\{\textup{e}^{-qA_{y,\epsilon}^{a,b}-p\tau_b^+}; \tau_b^+<\tau_a^-\}\nn\\
&=&\ex_y\{\textup{e}^{-(q+p)\tau_{y+\epsilon}^+}; \tau_{y+\epsilon}^+<\tau_a^-\}\cdot\ex_{y+\epsilon}\{\textup{e}^{-qA_{y,\epsilon}^{a,b}-p\tau_b^+}; \tau_b^+<\tau_a^-\}\nn\\
&=&\ex_y\{\textup{e}^{-(q+p)\tau_{y+\epsilon}^+}; \tau_{y+\epsilon}^+<\tau_a^-\}\cdot
\left(\ex_{y+\epsilon}\{\textup{e}^{-p\tau_b^+}; \tau_b^+<\tau_y^-\}\right.\nn\\
&&\left.+\ex_{y+\epsilon}\{\textup{e}^{-p\tau_y^-}; \tau_y^-<\tau_b^+\}\ex_y\{\textup{e}^{-qA_{y,\epsilon}^{a,b}-p\tau_b^+};\tau_b^+<\tau_a^-\}\right),\nn
\eq
from which we obtain that,  
\bq
&&\ex_y\{\textup{e}^{-qA_{y,\epsilon}^{a,b}-p\tau_b^+};\tau_b^+<\tau_a^-\}\nn\\
&=&\frac{\ex_{y+\epsilon}\{\textup{e}^{-p\tau_b^+}; \tau_b^+<\tau_y^-\}\ex_y\{\textup{e}^{-(q+p)\tau_{y+\epsilon}^{+}};\tau_{y+\epsilon}^+<\tau_a^-\}}{1-\ex_{y+\epsilon}\{\textup{e}^{-p\tau_y^-}; \tau_y^-<\tau_b^+\}\ex_y\{\textup{e}^{-(q+p)\tau_{y+\epsilon}^{+}};\tau_{y+\epsilon}^+<\tau_a^-\}}\nn\\
&=&\frac{W_p(y+\epsilon,y)}{W_p(b,y)}\frac{\frac{W_{q+p}(y,a)}{W_{q+p}(y+\epsilon,a)}}{1-\frac{W_p(b,y+\epsilon)}{W_p(b,y)}\frac{W_{q+p}(y,a)}{W_{q+p}(y+\epsilon,a)}}.\nn
\eq
The quantity $A_{y,\epsilon}^{a,b}$ measures the time for $X$ to spend below level $y$ and the time to move from $y$ to $y+\epsilon$, but not from $y+\epsilon$ to $y$, until $X$ exits from $(a,b)$. As $\epsilon\to 0+$,  by continuity of $X$, we have $A_{y,\epsilon}^{a,b}\to A_y^{a,b}$, $\pr_x$-a.s.
Using Lebesgue dominated convergence theorem and the continuity of $X$, we have
\bq
\ex_y\{\textup{e}^{-qA_y^{a,b}-p\tau_b^+};\tau_b^+<\tau_a^-\}&=&\lim_{\epsilon\to 0+}\ex_y\{\textup{e}^{-qA_{y,\epsilon}^{a,b}-p\tau_b^+}; \tau_b^+<\tau_a^-\}\nn\\
&=&\frac{s'(y)}{W_{p,1}(y,b)+W_p(b,y)\frac{W_{q+p,1}(y,a)}{W_{q+p}(y,a)}}.\nn
\eq
It follows that, for $x\in(a,y)$, using strong Markov property of $X$, we have
\bq
&&\ex_x\{\textup{e}^{-qA_y^{a,b}-p\tau_b^+};\tau_b^+<\tau_a^-\}\nn\\
&=&\ex_x\{\textup{e}^{-(q+p)\tau_y^+};\tau_y^+<\tau_a^-\}\cdot\ex_y\{\textup{e}^{-qA_y^{a,b}-p\tau_b^+};\tau_b^+<\tau_a^-\}\nn\\
&=&\frac{W_{q+p}(x,a)}{W_{q+p}(y,a)}\frac{s'(y)}{W_{p,1}(y,b)+W_p(b,y)\frac{W_{q+p,1}(y,a)}{W_{q+p}(y,a)}}.\nn
\eq
For $x\in(y,b)$, we similarly have
\bq
&&\ex_x\{\textup{e}^{-qA_y^{a,b}-p\tau_b^+}; \tau_b^+<\tau_a^-\}\nn\\
&=&\ex_x\{\textup{e}^{-p\tau_b^+}; \tau_b^+<\tau_y^-\}+\ex_x\{\textup{e}^{-p\tau_y^-}; \tau_y^-<\tau_b^+)\cdot\ex_y\{\textup{e}^{-qA_y^{a,b}-p\tau_b^+};\tau_b^+<\tau_a^-\}\nn\\
&=&\frac{W_p(x,y)}{W_p(b,y)}+\frac{W_p(b,x)}{W_p(b,y)}\frac{s'(y)}{W_{p,1}(y,b)+W_p(b,y)\frac{W_{q+p,1}(y,a)}{W_{q+p}(y,a)}}.\nn
\eq
Using the similar argument as above, we obtain \eqref{eq:oct_down}.
\end{proof}

\begin{proof}[Proof of Proposition \ref{thm:otbelow}]
We let $\epsilon=\frac{a}{N}$ for a large integer $N>0$. Using Lebesgue dominated convergence theorem, continuity and strong Markov property of $X$, we have
\bq
&&\ex_x\{\textup{e}^{-qB_x^a}; \ol{X}_{\sigma_a}\ge{x+a}\}=\ex_x\{\textup{e}^{-qB_x^a}; \tau_{x+a}^+<\sigma_a\}\nn\\
&=&\lim_{N\to\infty}\prod_{i=0}^{N-1}\ex_{x+i\epsilon}\{\textup{e}^{-qA_x^{x+i\epsilon-a, x+(i+1)\epsilon}}; \tau_{x+(i+1)\epsilon}^+<\tau_{x+i\epsilon-a}^-\}\nn\\
&=&\exp\bigg(\lim_{N\to\infty}\sum_{i=0}^{N-1}\left[\ex_{x+i\epsilon}\{\textup{e}^{-qA_x^{x+i\epsilon-a, x+(i+1)\epsilon}}; \tau_{x+(i+1)\epsilon}^+<\tau_{x+i\epsilon-a}^-\}-1\right]\bigg)\nn\\
&=&\exp\bigg(-\int_x^{x+a}\frac{\frac{s'(u)}{s'(x)}\frac{W_{q,1}(x,u-a)}{W_q(x,u-a)}\,\diff u}{1+\frac{s(u)-s(x)}{s'(x)}\frac{W_{q,1}(x,u-a)}{W_q(x,u-a)}}\bigg).\nn
\eq
Here we used Proposition \ref{thm:occupation_exit} in the last equality.
Moreover, notice that $\ex_x\{\textup{e}^{-qA_x^{u-a,z}}; \tau_{u-a}^-<\tau_{z}^+\}=\ex_x\{\textup{e}^{-qA_x^{u-a,\infty}}; \ol{X}_{\tau_{u-a}^-}<z\}$. It follows that,
\bq
\ex_x\{\textup{e}^{-qB_x^a}; \ol{X}_{\sigma_a}\in(x,{x+a})\}=\int_x^{x+a}\ex_x\{\textup{e}^{-qA_x^{u-a,\infty}}; \ol{X}_{\tau_{u-a}^-}\in \,\diff u\}\nn\\
=\int_x^{x+a}\frac{\partial }{\partial z}\bigg|_{z=u}\ex_x\{\textup{e}^{-qA_{x}^{u-a,z}}; \tau_{u-a}^-<\tau_z^+\}\,\diff u\nn\\
=\int_x^{x+a}\frac{1}{\left(1+\frac{s(u)-s(x)}{s'(x)}\frac{W_{q,1}(x,u-a)}{W_q(x,u-a)}\right)^2}\frac{s'(u)\,\diff u}{W_q(x,u-a)},\nn
\eq
where we used Proposition \ref{thm:occupation_exit} in the last equality.
\end{proof}

\begin{proof}[Proof of Theorem \ref{thm:ddexp}]
Let us denote by 
\be
g:=\inf\{t\ge 0\,:\, X_t=\ol{X}_{\ep}\}.\nn
\ee
Then we have
\be E_y^q=\int_0^g\ind_{\{Y_t>y\}}\,\diff t+\int_g^{\ep}\ind_{\{Y_t>y\}}\,\diff t:=E_y^{q,1}+E_y^{q,2}.\nn\ee
Below we compute the Laplace transforms of $E_y^{q,1}$ and $E_y^{q,2}$ conditioning on $\ol{X}_{\ep}$. More specifically, for $m>x$, we let $\epsilon=\frac{m-x}{N}$  for a large $N>0$. Then we have that 
\bq
&&\frac{\ex_x\{\textup{e}^{-pE_y^{q,1}}\,;\,\ol{X}_{\ep}\in \,\diff m\}}{\diff m}\nn\\
&=&-\frac{\partial}{\partial h}\bigg|_{h=0}\ex_x\{\exp\bigg(-p\int_0^{\tau_m^+}\ind_{\{Y_t>y\}}\,\diff t\bigg);\, \ol{X}_{\ep}\ge m+h\}\nn\\
&=&-\lim_{N\to\infty}\prod_{i=0}^{N-1}\ex_{x+i\epsilon}\{\exp\bigg(-p\int_0^{\tau_{x+(i+1)\epsilon}^+}\ind_{\{X_t<x+i\epsilon-y\}}\,\diff t\bigg); \tau_{x+{i+1}\epsilon}^+<\ep\}\nn\\
&&\times\frac{\partial}{\partial h}|_{h=0}\pr_m(\tau_{m+h}^+\le\ep)\nn\\
&=&\exp\bigg(\lim_{N\to\infty}\sum_{i=0}^{N-1}[\ex_{x+i\epsilon}\{\mathrm{e}^{-p\int_0^{\tau_{x+(i+1)\epsilon}^+}\ind_{\{X_t<x+i\epsilon-y\}}\diff t-q\tau_{x+(i+1)\epsilon}^+}\}-1]\bigg)\cdot\frac{{\phi_q^+}'(m)}{\phi_q^+(m)}\nn\\
&=&\exp\bigg(-\int_x^m\frac{W_{q,2}(u-y,u)+W_{q,1}(u,u-y)\frac{{\phi_{q+p}^+}'(u-y)}{\phi_{q+p}^+(u-y)}}{W_{q,1}(u-y,u)+W_q(u,u-y)\frac{{\phi_{q+p}^+}'(u-y)}{\phi_{q+p}^+(u-y)}}\,\diff u\bigg)\frac{{\phi_q^+}'(m)}{\phi_q^+(m)}.  \nn
\eq
The fourth equality follows from Corollary \ref{cor:onesided}.
On the other hand,
\begin{multline}
\ex_x\{\textup{e}^{-pE_y^{q,2}}|\ol{X}_{\ep}=m\}\\
=\lim_{\epsilon'\to 0+}\dfrac{\ex_m\{\exp\bigg(-p\dint_0^{\ep}\ind_{\{X_t<m-y\}}\,\diff t\bigg)\,;\,\ep<\tau_{m+\epsilon'}^+\}}{\pr_m(\ep<\tau_{m+\epsilon'}^+)}\\
=\lim_{\epsilon'\to0+}\bigg[\dfrac{\pr_m(\ep<\tau_{m-y}^-\wedge\tau_{m+\epsilon'}^+)}{\pr_m(\ep<\tau_{m+\epsilon'}^+)}+\\
\dfrac{\pr_m(\tau_{m-y}^-<\ep\wedge\tau_{m+\epsilon'}^+)}{\pr_m(\ep<\tau_{m+\epsilon'}^+)}\ex_{m-y}\{\mathrm{e}^{-p\int_0^{\ep}\mathbf{1}_{\{X_t<m-y\}}\,\diff t}\,;\,\ep<\tau_{m+\epsilon'}^+\}\bigg]\\
=1+
\lim_{\epsilon'\to0+}\bigg[\dfrac{\pr_m(\tau_{m-y}^-<\ep\wedge\tau_{m+\epsilon'}^+)}{\pr_m(\ep<\tau_{m+\epsilon'}^+)}\ex_{m-y}\{[\mathrm{e}^{-p\int_0^{\ep}\mathbf{1}_{\{X_t<m-y\}}\,\diff t}-1]\,;\,\ep<\tau_{m+\epsilon'}^+\}\bigg].\label{eq:timeafterpeak}
\end{multline}
To get the limit in \eqref{eq:timeafterpeak}, we use Corollary 3.4 of \cite{LiZhou12} to proceed as
\begin{align}
&\lim_{\epsilon'\to0+}\bigg[\dfrac{\pr_m(\tau_{m-y}^-<\ep\wedge\tau_{m+\epsilon'}^+)}{\pr_m(\ep<\tau_{m+\epsilon'}^+)}\ex_{m-y}\{[\mathrm{e}^{-p\int_0^{\ep}\mathbf{1}_{\{X_t<m-y\}}\,\diff t}-1]\,;\,\ep<\tau_{m+\epsilon'}^+\}\bigg]\nn\\
=&\lim_{\epsilon'\to0+}\dfrac{\dfrac{W_q(m+\epsilon',m)}{W_q(m+\epsilon',m-y)}}{1-\dfrac{\phi_q^+(m)}{\phi_q^+(m+\epsilon')}}\nn\\
&\times\bigg\{\dfrac{\dfrac{q}{p+q}W_q(m+\epsilon',m-y)\frac{{\phi_{q+p}^+}'(m-y)}{\phi_{q+p}^+(m-y)}-s'(m-y)+W_{q,1}(m-y,m+\epsilon')}{W_q(m+\epsilon',m-y)\frac{{\phi_{q+p}^+}'(m-y)}{\phi_{q+p}^+(m-y)}+W_{q,1}(m-y,m+\epsilon')}\nn\\
&-\bigg(1-\frac{\phi_q^+(m-y)}{\phi_q^+(m+\epsilon)}\bigg)\bigg\}\nn\\
=&-\frac{s'(m)}{{\phi_q^+}'(m)}\frac{{\phi_{q}^+}'(m-y)+\left[\frac{p}{q+p}\phi_q^ +(m)-\phi_q^+(m-y)\right]\frac{{\phi_{q+p}^+}'(m-y)}{\phi_{q+p}^+(m-y)}}{W_{q,1}(m-y,m)+W_q(m,m-y)\frac{{\phi_{q+p}^+}'(m-y)}{\phi_{q+p}^+(m-y)}}\nn.
\end{align}
It follows that
\begin{multline}
\ex_x\{\textup{e}^{-pE_y^{q,2}}|\ol{X}_{\ep}=m\}\\=\frac{W_{q,2}(m-y,m)+\left[W_{q,1}(m,m-y)-\frac{p}{q+p}s'(m)\right]\frac{{\phi_{q+p}^+}'(m-y)}{\phi_{q+p}^+(m-y)}}{W_{q,1}(m-y,m)+W_q(m,m-y)\frac{{\phi_{q+p}^+}'(m-y)}{\phi_{q+p}^+(m-y)}}\frac{\phi_q^+(m)}{{\phi_q^+}'(m)}.\nn
\end{multline}
The proof is complete after integration with respect to $m$.
\end{proof}

\section*{Acknowledgements}
The author is grateful to Professor Ryozo Miura and Professor Thomas Mikosch for their helpful comments.

\bibliographystyle{plain}

\begin{thebibliography}{10}

\bibitem{AlbrGerbShiu}
H.~Albrecher, H.~Gerber, and E.~Shiu.
\newblock The optimal dividend barrier in the {G}amma-{O}mega model.
\newblock {\em European Actuarial Journal}, 1:43--55, 2011.

\bibitem{borodin2002handbook}
A.N. Borodin and P.~Salminen.
\newblock {\em Handbook of Brownian motion: facts and formulae}.
\newblock Birkhauser, Basel-Boston-Berlin, 2nd edition, 2002.

\bibitem{CaiChenWan}
N.~Cai, N.~Chen, and X.~Wan.
\newblock Occupation times of jump-diffusion processes with double exponential
  jumps and the pricing of options.
\newblock {\em Mathematics of Operations Research}, 35(2):412--437, 2010.

\bibitem{CarrZhanHaji}
P.~Carr, H.~Zhang, and O.~Hadjiliadis.
\newblock Maximum drawdown insurance.
\newblock {\em International Journal of Theoretical and Applied Finance},
  14(8):1195--1230, 2011.

\bibitem{CherNikePlat}
P.~Cheridito, A.~Nikeghbali, and E.~Platen.
\newblock Processes of class {S}igma, last passage times, and drawdowns.
\newblock {\em SIAM Journal on Financial Mathematics}, 3(1):280--303, 2012.

\bibitem{ChesPicqYor}
M.~Chesney, M.~Jeanblanc-Picque, and M.~Yor.
\newblock Parisian options and excursions theory.
\newblock In {\em The 5th Annual Derivative Conference}, Cornell, 1995.

\bibitem{FordePogudinZhang}
M.~Forde, A.~Pogudin, and H.~Zhang.
\newblock Hitting times, occupation times, tri-variate laws and the forward
  {K}olmogorov equation for a one-dimensional diffusion with memory.
\newblock {\em Advances in Applied Probability}, 45(3):860--875, 2013.

\bibitem{GerbShuiYang}
H.~Gerber, E.~Shiu, and H.~Yang.
\newblock The {O}mega model: from bankruptcy to occupation times in the red.
\newblock {\em European Actuarial Journal}, 2(2):259--272, 2012.

\bibitem{GrosZhou}
S.~J. Grossman and Z.~Zhou.
\newblock Optimal investment strategies for controlling drawdowns.
\newblock {\em Mathematical Finance}, 3(3):241--276, 1993.

\bibitem{HadjVece}
O.~Hadjiliadis and J.~Vecer.
\newblock Drawdowns preceding rallies in a {B}rownian motion model.
\newblock {\em Quantitative Finance}, 5(5):403--409, 2006.

\bibitem{HadjZhanPoor}
O.~Hadjiliadis, H.~Zhang, and H.~V. Poor.
\newblock One-shot schemes for decentralized quickest detection.
\newblock {\em IEEE Transactions on Information Theory}, 55(7):3346--3359,
  2009.

\bibitem{occupationLevy}
A.E. Kyprianou, J.C. Pardo, and J.L. P\'erez.
\newblock Occupation times of refracted {L}\'evy processes.
\newblock {\em Journal of Theoretical Probability}, 2013.

\bibitem{LandPenaZhou}
D.~Landriault, J.-F. Renaud, and X.~Zhou.
\newblock Occupation times of spectrally negative {L}\'evy processes with
  applications.
\newblock {\em Stochastic Processes and Their Applications},
  121(11):2629--2641, 2011.

\bibitem{Lehoczky77}
J.P. Lehoczky.
\newblock Formulas for stopped diffusion processes with stopping times based on
  the maximum.
\newblock {\em Annals of Probability}, 5(4):601--607, 1977.

\bibitem{LiZhou12}
B.~Li and X.~Zhou.
\newblock The joint {L}aplace transforms for diffusion occupation times.
\newblock {\em Advances in Applied Probability}, 45(4):1049--1067, 2013.

\bibitem{MagdAtiy}
M.~Magdon-Ismail and A.~Atiya.
\newblock Maximum drawdown.
\newblock {\em Risk}, 17(10):99--102, 2004.

\bibitem{Meil}
I.~Meilijson.
\newblock The time to a given drawdown in {B}rownian motion.
\newblock {\em Seminaire de Probabilit\'es XXXVII}, pages 94--108, 2003.

\bibitem{MijatovicPistorius}
A.~Mijatovic and M.~Pistorius.
\newblock On the drawdown of completely asymmetric {L}\'evy processes.
\newblock {\em Stochastic Processes and Their Applications},
  122(11):3812--3836, 2012.

\bibitem{miuraOrderStatistics}
R.~Miura.
\newblock A note on look-back options based on order statistics.
\newblock {\em Hitotsubashi Journal of Commerce and Management}, 27(1):15--28,
  1992.

\bibitem{miura}
R.~Miura.
\newblock Rank process, stochastic corridor and applications to finance.
\newblock {\em Advances in Statistical Modeling and Inference, Essays in Honor
  of Kjell A Doksum, ch 26}, pages 529--542, 2007.

\bibitem{Peskir}
G.~Peskir.
\newblock Optimal stopping of the maximum process: the maximality principle.
\newblock {\em Annals of Probability}, 26(4):1614--1640, 1998.

\bibitem{PitmanYor1}
J.~Pitman and M.~Yor.
\newblock Laplace transforms related to excursions of a one-dimensional
  diffusion.
\newblock {\em Bernoulli}, 5(2):249--255, 1999.

\bibitem{PitmanYor2}
J.~Pitman and M.~Yor.
\newblock Hitting, occupation and inverse local times of one-dimensional
  diffusions: martingale and excursion approaches.
\newblock {\em Bernoulli}, 9(1):1--24, 2003.

\bibitem{PoorHadj}
H.~V. Poor and O.~Hadjiliadis.
\newblock {\em Quickest Detection}.
\newblock Cambridge University Press, Cambridge, UK, 2008.

\bibitem{PospVecePDE}
L.~Pospisil and J.~Vecer.
\newblock {PDE} methods for the maximum drawdown.
\newblock {\em Journal of Computational Finance}, 12(2):59--76, 2008.

\bibitem{PospVece}
L.~Pospisil and J.~Vecer.
\newblock Portfolio sensitivities to the changes in the maximum and the maximum
  drawdown.
\newblock {\em Quantitative Finance}, 10(6):617--627, 2010.

\bibitem{PospVeceHadj}
L.~Pospisil, J.~Vecer, and O.~Hadjiliadis.
\newblock Formulas for stopped diffusion processes with stopping times based on
  drawdowns and drawups.
\newblock {\em Stochastic Processes and Their Applications}, 119(8):2563--2578,
  2009.

\bibitem{Protterbook}
P.~E. Protter.
\newblock {\em Stochastic integration and differential equations}.
\newblock Springer, Berlin-Heidelberg-New York, 2003.

\bibitem{salmval}
P.~Salminen and P.~Vallois.
\newblock On maximum increase and decrease of {B}rownian motion.
\newblock {\em Annales de l'Institut Henri Poincar\'{e} (B) Probabilit\'{e}s et
  Statistiques}, 43(6):655--676, 2007.

\bibitem{SheppShiryaev}
L.~Shepp and A.~N. Shiryaev.
\newblock The {R}ussian option: reduced regret.
\newblock {\em The Annals of Applied Probability}, 3(3):631--640, 1993.

\bibitem{ShiryaevCUSUM}
A.~N. Shiryaev.
\newblock Minmax optimality of the method of cumulative sums ({CUSUM}) in the
  continuous time.
\newblock {\em Russian Mathematical Surveys}, 51(4):750--751, 1996.

\bibitem{Vece06}
J.~Vecer.
\newblock Maximum drawdown and directional trading.
\newblock {\em Risk}, 19(12):88--92, 2006.

\bibitem{Vece07}
J.~Vecer.
\newblock Preventing portfolio losses by hedging maximum drawdown.
\newblock {\em Wilmott}, 5(4):1--8, 2007.

\bibitem{yamasatotaka}
K.~Yamamoto, S.~Sato, and A.~Takahashi.
\newblock Probability distribution and option pricing for drawdown in a
  stochastic volatility environment.
\newblock {\em International Journal of Theoretical and Applied Finance},
  13(2):335--354, 2010.

\bibitem{Zhang2010}
H.~Zhang.
\newblock Drawdowns, drawups, and their applications.
\newblock Ph.D. Dissertation, City University of New York, 2010.

\bibitem{ZhanHadj}
H.~Zhang and O.~Hadjiliadis.
\newblock Drawdowns and rallies in a finite time-horizon.
\newblock {\em Methodology and Computing in Applied Probability},
  12(2):293--308, 2010.

\bibitem{ZhanHadj12}
H.~Zhang and O.~Hadjiliadis.
\newblock Drawdowns and the speed of a market crash.
\newblock {\em Methodology and Computing in Applied Probability},
  14(8):739--752, 2012.

\bibitem{corr}
H.~Zhang and O.~Hadjiliadis.
\newblock Quickest detection in a system with correlated noise.
\newblock {\em Proceedings of the 51st IEEE Conference on Decision and
  Control}, pages 4757--4763, December 2012.

\bibitem{HadjSIAMCO}
H.~Zhang, O.~Hadjiliadis, T.~Sch\"afer, and H.~V. Poor.
\newblock Quickest detection in coupled systems.
\newblock Preprint, 2014.

\bibitem{ZhanLeunHadj}
H.~Zhang, T.~Leung, and O.~Hadjiliadis.
\newblock Stochastic modeling and fair valuation of drawdown insurance.
\newblock {\em Insurance: Mathematics and Economics}, 53(3):840--850, 2013.

\end{thebibliography}
\footnotesize

\end{document}